\newtheorem{theorem}{Theorem}
\newtheorem{proposition}{Proposition}
\newtheorem{lemma}{Lemma}
\newtheorem{corollary}{Corollary}
\theoremstyle{definition}
\newtheorem{definition}[theorem]{Definition}
\theoremstyle{remark}
\newtheorem{remark}{Remark}
\newcommand{\curly}[1]{\mathscr{#1}}
\newcommand{\cE}{\curly{E}}
\newcommand{\cs}{\mathrm{cs}}
\newcommand{\Str}{\mathrm{Struct}}
\newcommand{\CS}{\mathrm{CS}}
\newcommand{\ch}{\mathrm{ch}}
\newcommand{\del}{\partial}
\newcommand{\s}{\sigma}
\newcommand{\f}{\bar{f}}
\newcommand{\bc}{\mathrm{bc}}   
\newcommand{\BC}{\mathrm{BC}}
\newcommand{\im}{\mathrm{Im}}
\newcommand{\re}{\mathrm{Re}}
\newcommand{\A}{\mathcal{A}}
\newcommand{\delb}{\bar{\partial}}
\newcommand{\C}{{\mathbb{C}}}
\newcommand{\R}{{\mathbb{R}}}
\newcommand{\PP}{{\mathbb{P}}}
\DeclareMathOperator{\Tr}{tr}
\DeclareMathOperator{\td}{td}
\numberwithin{equation}{section}
\begin{document}
\title{On Bott-Chern forms and their applications}
\author{Vamsi P. Pingali}
\address{Department of Mathematics, Johns Hopkins University, 404 Krieger Hall, Baltimore, MD - 21218, USA}
\email{vpingal1@jhu.edu}    
\author{Leon A. Takhtajan}
 \address{Department of Mathematics, Stony Brook University, Stony Brook NY, 11794-3651, USA;
 \newline
            The Euler Mathematical Institute, Pesochnaya nab. 10, St. Petersburg 197022, Russia}
\email{leontak@math.sunysb.edu}
\maketitle
\begin{abstract}We use Chern-Weil theory 
for Hermitian holomorphic vector bundles with canonical connections for explicit computation of the Chern forms of trivial bundles with special non-diagonal Hermitian metrics. We prove that every $\delb\del$-exact  real form of the type $(k,k)$ on an $n$-dimensional  complex manifold $X$ arises as a difference of the Chern character forms of trivial Hermitian vector bundles with canonical connections, and that modulo $\im\,\del+\im\,\delb$ every real form of type $(k,k)$, $k<n$, arises as a Bott-Chern form for two Hermitian metrics on some trivial vector bundle over $X$. The latter result is a complex manifold analogue of Proposition 2.6 in the paper \cite{SS} by J. Simons and D. Sullivan. As an application, we obtain an explicit formula for the Bott-Chern form of a short exact sequence of holomorphic vector bundles considered by Bott and Chern in \cite{BC}, for the case when the first term is a line bundle.
\end{abstract}
\section{Introduction}\label{Sect:1}

In \cite{SS} J. Simons and D. Sullivan constructed a simple geometric model
for differential $K$-theory. The model uses the notion of a structured vector bundle: a pair $(V,\{\nabla\})$ consisting of a complex vector bundle $V$ over a smooth manifold $X$ and the equivalence class of a connection $\nabla$. Two connections $\nabla^{0}$ and $\nabla^{1}$ are said to be equivalent if the corresponding Chern-Simons differential form is exact. The main technical innovation in \cite{SS} was Proposition 2.6 which states that all odd forms on $X$, modulo some natural relations, arise as the Chern-Simons forms between the trivial connection and an arbitrary connection on trivial bundles over $X$. It allows one to prove that differential $K$-theory has a natural analogue of the celebrated character diagram for the ring of Cheeger-Simons differential characters (see \cite{SS2} and \cite{Cheeger}). 

For Hermitian holomorphic vector bundles --- holomorphic vector bundles over the complex manifold $X$ with Hermitian metrics --- analogues of the Chern-Simons forms are the Bott-Chern forms, which were introduced in \cite{BC} earlier than the Chern-Simons forms in \cite{CS}. The corresponding differential $K$-theory was defined by H. Gillet and C. Soul\'{e} in \cite{GS}. 

In this paper we use Chern-Weil theory for Hermitian holomorphic vector bundles with canonical connections for explicit computation of the Chern forms for trivial bundles with special non-diagonal Hermitian metrics. Our first result is the exact analogue of Proposition 2.6 in \cite{SS} for complex manifolds. Namely, we prove that all real forms of type $(k,k)$ on an $n$-dimensional complex manifold $X$, $k<n$,  modulo $\im\,\del+\im\,\delb$, arise as Bott-Chern forms for Hermitian metrics on trivial vector bundles over $X$.
As in the smooth manifold case, we deduce this statement from Theorem \ref{Venice-hol}, which says that every $\delb\del$-exact real form of type $(k,k)$ on a complex manifold $X$ arises as a difference of the Chern character forms on trivial Hermitian vector bundles. The proof uses an explicit computation of Chern forms for trivial vector bundles, given in Lemma \ref{Main}.  The actual computation is based on Lemma \ref{Algebra}, 
which gives an explicit formula for determinants of special matrices over a ring with nilpotents. These results have interesting applications of their own. Thus using Lemma \ref{Algebra},
in Proposition \ref{B-C-formula-1} we obtain an explicit formula for the Bott-Chern form of a short exact sequence of holomorphic vector bundles considered by Bott and Chern
in \cite{BC}, for the case when the first term is a line bundle. 

Here is a more detailed content of the paper. In Section \ref{Sect:2}, for the convenience of the reader, we give a brief review of \cite{SS}. Namely, we use the definition of
the Chern-Simons forms inspired by the approach of H. Gillet and C. Soul\'{e} for the complex manifold case \cite{GS}, and deduce a somewhat stronger analogue of Proposition 2.6 in \cite{SS} --- Corollary \ref{V-real} --- from Proposition \ref{Venice}. The latter states that for every exact even form $\omega$ on a smooth manifold $X$ there is a trivial vector bundle $V$ over $X$ with a connection $\nabla$ such that
$$\ch(V,\nabla)-\ch(V,d)=\omega,$$
where $d$ stands for the trivial connection on $V$. We use Corollary \ref{V-real} to give a different proof of Theorem 1.15 in \cite{SS}, that does not rely on the existence of universal connections and works for the non-compact case as well.

In Section \ref{Sect:3} we prove Theorem \ref{Venice-hol}, which states that for every $\delb\del$-exact real form $\omega$ of type $(k,k)$ on a general complex manifold $X$ 
there is a trivial vector bundle $E$ over $X$ with two Hermitian metrics $h_{1}$ and $h_{2}$ such that
$$\ch(E,h_{1})-\ch(E,h_{2})=\omega.$$ 
The proof in the general case is based on Lemma \ref{Main}, where we explicitly compute the Chern form of a trivial vector bundle over $X$ of arbitrary rank equipped with a special non-diagonal Hermitian metric, and on Lemma \ref{Connes-hol} where we express real forms of type $(k,k)$ as finite linear combinations of wedge products of real $(1,1)$-forms of special type. In turn, Lemma \ref{Main} is based on the linear algebra Lemma \ref{Algebra}, which gives an explicit formula for the determinants of certain matrices over a ring with nilpotents. When $X$ is compact or is a submanifold of
$\C^{n}$, we give another proof of Theorem \ref{Venice-hol} based on Lemma \ref{Connes-hol-2}.
We deduce the complex manifold analogue of Proposition 2.6 in \cite{SS} --- Corollary \ref{V-complex} --- by using the Gillet-Soul\'{e} definition of the Bott-Chern forms \cite{GS}. 

In Section \ref{Sect:4} we present some applications of our results. In Section \ref{Sub:4.1}, we use Corollary \ref{V-complex} in order to get rid of the differential form in the complex manifold version of differential $K$-theory \cite{GS}. However, developing differential $K$-theory for the complex manifolds in the spirit of \cite{SS} is an open and difficult problem since, in general, `inverses' for the holomorphic bundles do not exist. In Section \ref{Sub:4.2} we explicitly compute the Bott-Chern form for the short exact sequence of Hermitian holomorphic vector bundles considered by Bott and Chern in \cite{BC} for the case when the first term is a line bundle. This formula can be used to simplify the computation in \cite{Mou} of the Bott-Chern form for the metrized Euler sequence of a projectivized vector bundle.

\section{Complex vector bundles over a smooth manifold}\label{Sect:2}
\subsection{Chern-Simons secondary forms}\label{Sub:2.1}
Let $X$ be a smooth $n$-dimensional manifold, let
$$\mathcal{A}(X)=\bigoplus_{k=0}^{n}\mathcal{A}^{k}(X,\C)=\mathcal{A}^{\mathrm{even}}(X)\oplus\mathcal{A}^{\mathrm{odd}}(X)$$
be the graded commutative algebra of smooth complex differential forms on $X$, and let $V$ be a $C^{\infty}$-complex vector bundle over $X$ with a connection $\nabla$.  Recall that the Chern character form $\ch(V,\nabla)$ for the pair $(V,\nabla)$ is defined by
$$\ch(V,\nabla)=\Tr\left\{\exp\left(\frac{\sqrt{-1}}{2\pi}\,\nabla^{2}\right)\right\}\in \mathcal{A}^{\mathrm{even}}(X).$$
Here $\nabla^{2}$ is the curvature of the connection $\nabla$ --- an $\mathrm{End} \,V$-valued $2$-form on $X$ ---
and $\Tr$ is the trace in the endomorphism bundle $\mathrm{End} \,V$. The Chern character form is closed, $\mathrm{d}\,\ch(V,\nabla)=0$, and its cohomology class in $H^{\ast}(X,\C)$ does not depend on the choice of  $\nabla$.

Let $\nabla^{0}$ and $\nabla^{1}$ be two connections on $V$. In \cite{CS}, S.S. Chern and J. Simons introduced secondary characteristic forms --- 
the Chern-Simons forms. Namely, the Chern-Simons form $\cs(\nabla^{1},\nabla^{0})\in\mathcal{A}^{\mathrm{odd}}(X)$  defined modulo $d\mathcal{A}^{\mathrm{even}}(X)$, satisfies the equation 
\begin{equation} \label{cs-1}
d\,\cs(\nabla^{1},\nabla^{0})=\ch(V,\nabla^{1})-\ch(V,\nabla^{0}),
\end{equation} 
and enjoys a functoriality property under the pullbacks with smooth maps. 

Here we present a construction of the Chern-Simons form $\cs(\nabla^{1},\nabla^{0})$ which is similar to the construction of Bott-Chern forms for holomorphic vector bundles given by H. Gillet and C. Soul\'{e} in \cite{GS}. Namely, for a given $V$ put $\tilde{V}=\pi^{*}(V)$, where $\pi: X\times S^{1}\mapsto X$ is a projection, and $S^{1}=\{e^{i\theta}, 0\leq\theta< 2\pi\}$. For every $\theta$ define the map $i_{\theta}:X\mapsto X\times S^{1}$ by $i_{\theta}(x)=(x,e^{i\theta})$, and let $\tilde{\nabla}$ be a connection on $\tilde{V}$ such that 
$$i_{0}^{*}(\tilde{\nabla})=\nabla^{0},\quad i_{\pi}^{*}(\tilde{\nabla})=\nabla^{1}.$$
Denote by $g$ a function defined by
$$g(\theta)=\begin{cases} 0\quad \text{if} \quad 0\leq \theta<\pi, \\
1 \quad \text{if} \quad \pi\leq\theta<2\pi,\end{cases}$$
and extended $2\pi$-periodically to $\mathbb{R}$. It defines a function $g: S^{1}\mapsto\mathbb{R}$, which is discontinuous at $0$ and $\pi$.\\
\indent In this construction the Chern-Simons form is
\begin{equation} \label{cs-def}
\cs(\nabla^{1},\nabla^{0})=\pi_{\ast}(g(\theta)\ch(\tilde{V},\tilde{\nabla}))=\int_{S^{1}}g(\theta)\ch(\tilde{V},\tilde{\nabla})\in \mathcal{A}^{\mathrm{odd}}(X)
\end{equation} 
-- integration over the fibres of $\pi$.

\begin{remark} Connection $\tilde{\nabla}$ is trivial to construct. If in local coordinates $\nabla^{i}=d_{x}+A^{i}(x)$, where $d_{x}$ is deRham differential on $X$ and $i=0,1$, then 
$$\tilde{\nabla}=d_{x}+d_{\theta}+A(x,\theta),$$
where $A(x,\theta)$ is $2\pi$-periodic and $A(x,0)=A^{0}(x)$, $A(x,\pi)=A^{1}(x)$.
\end{remark}
The following lemma is proved using exactly the same technique as in \cite{GS}.
\begin{lemma} \label{C-S-1}The Chern-Simons form  $\cs(\nabla^{1},\nabla^{0})$
satisfies the equation \eqref{cs-1}, and modulo $d\mathcal{A}^{\mathrm{even}}(X)$ it does not depend on the choice of connection $\tilde{\nabla}$.
\end{lemma}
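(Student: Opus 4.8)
The plan is to handle the two assertions separately, noting that in both cases the mechanism is identical: one differentiates a fibre integral over $S^1$, uses that the Chern character form $\ch(\tilde V,\tilde\nabla)$ is \emph{closed} on $X\times S^1$, and collects the contributions coming from the two points $\theta=0,\pi$ where the weight $g$ fails to be smooth.

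For the identity $d\,\cs(\nabla^1,\nabla^0)=\ch(V,\nabla^1)-\ch(V,\nabla^0)$, I would first observe that, since $g\equiv 1$ on the arc $[\pi,2\pi]\subset S^1$ and $g\equiv 0$ elsewhere, the defining formula reads $\cs(\nabla^1,\nabla^0)=\pi_*\big(g(\theta)\,\ch(\tilde V,\tilde\nabla)\big)$. Applying the standard commutation rule between $d$ and integration over a closed $1$-dimensional fibre, together with $d\,\ch(\tilde V,\tilde\nabla)=0$, the exterior derivative collapses to $\pm\,\pi_*\big(dg\wedge\ch(\tilde V,\tilde\nabla)\big)$. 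The crucial point is that $dg$ is supported at $\theta=0$ and $\theta=\pi$: it is the difference of the two $\delta$-distributions there, equivalently the oriented boundary $\{2\pi\}-\{\pi\}$ of the arc on which $g=1$ (so one may alternatively write $\cs=\int_{[\pi,2\pi]}\ch(\tilde V,\tilde\nabla)$ and invoke Stokes' theorem for fibre integration over a manifold with boundary). Pairing $\ch(\tilde V,\tilde\nabla)$ against $dg$ picks out its $d\theta$-free part evaluated at $\theta=\pi$ and $\theta=0$, and by naturality of the Chern character under the pullbacks $i_\pi$ and $i_0$ these restrictions are $\ch(V,\nabla^1)$ and $\ch(V,\nabla^0)$; with a fixed sign convention this yields \eqref{cs-1}.

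For the independence statement, let $\tilde\nabla_0$ and $\tilde\nabla_1$ be two connections on $\tilde V$ with $i_0^*\tilde\nabla_j=\nabla^0$, $i_\pi^*\tilde\nabla_j=\nabla^1$. I would interpolate them: on the bundle $\hat V$ pulled back to $X\times S^1\times[0,1]$ take the connection $\hat\nabla$ which along the slice $s=\mathrm{const}$ equals the affine combination $(1-s)\tilde\nabla_0+s\tilde\nabla_1$. Since $\tilde\nabla_0$ and $\tilde\nabla_1$ already coincide over $X\times\{0\}$ and over $X\times\{\pi\}$, the connection $\hat\nabla$ restricts over $X\times\{0\}\times[0,1]$ and over $X\times\{\pi\}\times[0,1]$ to the pullbacks from $X$ of $\nabla^0$ and $\nabla^1$, hence with Chern character forms carrying no $ds$-component. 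Set $\beta=\pi_*\big(g(\theta)\,\ch(\hat V,\hat\nabla)\big)\in\mathcal{A}^{\mathrm{odd}}(X\times[0,1])$, where now $\pi\colon X\times S^1\times[0,1]\to X\times[0,1]$. Restricting to $s=0,1$ recovers the two Chern-Simons forms $\cs_0$ and $\cs_1$, while the computation of the previous paragraph, applied with $X\times[0,1]$ in the role of the base, shows that $d\beta$ is the pullback from $X$ of $\ch(V,\nabla^1)-\ch(V,\nabla^0)$, in particular has no $ds$-component. Writing $\beta=\beta_0+ds\wedge\beta_1$ with $\beta_0,\beta_1$ forms on $X$ depending on $s$, the vanishing of the $ds$-part of $d\beta$ forces $\partial_s\beta_0=d_X\beta_1$, and integrating in $s$ gives $\cs_1-\cs_0=d_X\!\int_0^1\beta_1\,ds\in d\mathcal{A}^{\mathrm{even}}(X)$ (here $\beta_1$ is of even degree on $X$).

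I expect the only real obstacle to be sign and orientation bookkeeping: one has to fix once and for all compatible conventions for integration over the fibre $S^1$ so that the jumps of $g$ at $\theta=0$ and $\theta=\pi$ assemble with the correct signs into $\ch(V,\nabla^1)-\ch(V,\nabla^0)$ rather than its negative, and then check that nothing is disturbed when the base is enlarged from $X$ to $X\times[0,1]$. By contrast, the existence of the interpolating connection $\hat\nabla$ with the stated boundary behaviour is automatic, since connections form an affine space and $\tilde\nabla_0,\tilde\nabla_1$ agree on the relevant submanifolds.
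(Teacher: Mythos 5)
Your argument is correct. The first half is the same as the paper's: commute $d$ with the fibre integral, use closedness of $\ch(\tilde V,\tilde\nabla)$, and let the jumps of $g$ at $\theta=0,\pi$ (i.e.\ $d_\theta g=(\delta_\pi-\delta_0)\,d\theta$) produce $\ch(V,\nabla^1)-\ch(V,\nabla^0)$.

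For the independence statement the two proofs share the same key idea --- a two-parameter interpolation whose restriction to $\theta=0$ and $\theta=\pi$ is constant in the second parameter, so that the Chern character there has no component along that parameter --- but they execute it differently. The paper puts the second parameter on another circle, forming a connection $\widehat{\nabla}$ on $X\times S^1\times S^1$, expresses $\ch(\tilde\nabla)-\ch(\tilde\nabla')$ as $(d_x+d_{\theta_1})$ of an auxiliary Chern--Simons form in the $\theta_2$-direction, integrates by parts in $\theta_1$, and checks that the boundary terms at $\theta_1=0,\pi$ vanish because the restricted $\ch(\widehat\nabla)$ has no $d\theta_2$-component. You instead interpolate affinely over $X\times S^1\times[0,1]$, fibre-integrate to get $\beta$ on $X\times[0,1]$, observe $d\beta$ has no $ds$-component, and read off $\cs_1-\cs_0=d_X\int_0^1\beta_1\,ds$ from the decomposition $\beta=\beta_0+ds\wedge\beta_1$. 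Your version is the standard transgression/homotopy argument and avoids introducing the iterated Chern--Simons form and the $2\pi$-periodicity bookkeeping on the second circle; the paper's version stays entirely within its $S^1$-based, Gillet--Soul\'e-style formalism, which is the template it reuses later for Bott--Chern forms. Both are complete; only the usual orientation and sign conventions for fibre integration need to be fixed consistently, as you note.
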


\begin{definition} Put
$$\CS(\nabla^{1},\nabla^{0})=\cs(\nabla^{1},\nabla^{0})\!\!\!\!\!\mod\!d\mathcal{A}^{\mathrm{even}}(X),$$
which, according to Lemma \ref{C-S-1}, is a well-defined element in $\widetilde{\mathcal{A}}^{\mathrm{odd}}(X)=\mathcal{A}^{\mathrm{odd}}(X)/d\mathcal{A}^{\mathrm{even}}(X)$.
\end{definition}
\begin{remark} Formula \eqref{cs-def} can be written as 
$$\cs(\nabla^{1},\nabla^{0})=\int_{\pi}^{2\pi}\ch(\tilde{\nabla}),$$
and the choice of points $\pi$ and $2\pi$ on the unit circle is immaterial. Using the change of variables, for every $\alpha<\beta$ on $S^{1}$ we get 
\begin{equation} \label{cs-2}
\cs(\nabla^{1},\nabla^{0})=\int_{\alpha}^{\beta}\ch(\tilde{\nabla}),
\end{equation}
where now $i_{\alpha}^{*}(\tilde{\nabla})=\nabla^{0}$, $i_{\beta}^{*}(\tilde{\nabla})=\nabla^{1}$.
\end{remark}
Using \eqref{cs-2} and Lemma \ref{C-S-1}, we immediately get
\begin{corollary}\label{Cor-1}
$$\CS(\nabla^{2},\nabla^{0})=\CS(\nabla^{2},\nabla^{1})+\CS(\nabla^{1},\nabla^{0}).$$
\end{corollary}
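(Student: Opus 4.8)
The plan is to realize all three Chern--Simons forms in the identity as integrals of the Chern character of one and the same interpolating connection, and then to invoke additivity of the integral over adjacent arcs of $S^{1}$. First I would fix three points $\alpha<\beta<\gamma$ on $S^{1}$ and, following the Remark after Lemma 2.1, build a connection $\tilde{\nabla}$ on $\tilde{V}=\pi^{*}(V)$ over $X\times S^{1}$ with $i_{\alpha}^{*}(\tilde{\nabla})=\nabla^{0}$, $i_{\beta}^{*}(\tilde{\nabla})=\nabla^{1}$, $i_{\gamma}^{*}(\tilde{\nabla})=\nabla^{2}$; in local coordinates one simply takes $\tilde{\nabla}=d_{x}+d_{\theta}+A(x,\theta)$ with $A(x,\cdot)$ a $2\pi$-periodic interpolation passing through $A^{0},A^{1},A^{2}$ at $\alpha,\beta,\gamma$.

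Then, by formula \eqref{cs-2} applied three times to this single $\tilde{\nabla}$,
$$\cs(\nabla^{1},\nabla^{0})=\int_{\alpha}^{\beta}\ch(\tilde{\nabla}),\qquad \cs(\nabla^{2},\nabla^{1})=\int_{\beta}^{\gamma}\ch(\tilde{\nabla}),\qquad \cs(\nabla^{2},\nabla^{0})=\int_{\alpha}^{\gamma}\ch(\tilde{\nabla}).$$
Since $\int_{\alpha}^{\gamma}=\int_{\alpha}^{\beta}+\int_{\beta}^{\gamma}$ for the positively oriented adjacent arcs, we get $\cs(\nabla^{2},\nabla^{0})=\cs(\nabla^{2},\nabla^{1})+\cs(\nabla^{1},\nabla^{0})$ as an identity of differential forms for this particular choice of $\tilde{\nabla}$, and reducing modulo $d\mathcal{A}^{\mathrm{even}}(X)$ yields the asserted equality in $\widetilde{\mathcal{A}}^{\mathrm{odd}}(X)$.

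The only point that requires care is that each of the three terms $\CS(\nabla^{i},\nabla^{j})$ must be computed with a mutually consistent choice of interpolating connection; this is precisely what the second assertion of Lemma 2.1 provides, namely that $\cs(\nabla^{i},\nabla^{j})$ is independent of $\tilde{\nabla}$ modulo $d\mathcal{A}^{\mathrm{even}}(X)$. So there is no genuine obstacle: once Lemma 2.1 and \eqref{cs-2} are available, the statement reduces to splitting an integral over a circle into two adjacent arcs, and the only bookkeeping is arranging a single connection on $X\times S^{1}$ that restricts correctly at three prescribed points, which is immediate.
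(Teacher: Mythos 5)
Your proposal is correct and is essentially the paper's own argument: the paper deduces the corollary ``immediately'' from formula \eqref{cs-2} and Lemma 2.1, which is exactly what you spell out by choosing one interpolating connection through the three prescribed points and splitting the fibre integral over adjacent arcs, with Lemma 2.1 guaranteeing independence of the choice modulo $d\mathcal{A}^{\mathrm{even}}(X)$.
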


\subsection{$K$-theory}\label{Sub:2.2}
Let $K_{0}(X)$ be the Grothendieck group of $X$ --- the quotient of the free abelian group generated by the isomorphism 
classes $[V]$ of complex vector bundles $V$ over $X$ by the relations $[V] + [W]=[V\oplus W]$. 
In \cite{SS}, the authors defined a version of differential $K$-theory using the notion of a \emph{structured bundle}. Namely, connections $\nabla^{0}$ and $\nabla^{1}$ on a complex vector bundle $V$ over $X$ are called equivalent, if $\CS(\nabla^{1},\nabla^{0})=0$. It follows from Corollary \ref{Cor-1} that it is an equivalence relation.
\begin{definition} A pair $\mathcal{V}=(V,\{\nabla\})$, where $\{\nabla\}$ is an equivalence class of connections on $V$, is called a \emph{structured bundle}.
\end{definition}
Denote by $\Str(X)$ the set of all equivalence classes of structured bundles over $X$. It is shown in \cite{SS} that $\Str(X)$ is a commutative semi-ring with respect to the direct sum
$\oplus$ and tensor product $\otimes$ operations, and we denote by $\hat{K}_{0}(X)$ the corresponding Grothendieck ring.

We have two natural ring homomorphisms: the `forgetful map' 
$$\delta: \hat{K}_{0}(X)\rightarrow K_{0}(X),$$ 
given by $[\mathcal{V}]\mapsto [V]$ for $\mathcal{V}=(V,\{\nabla\})$,
and the Chern character map 
$$\ch: \hat{K}_{0}(X)\rightarrow \A^{\mathrm{even}}(X),$$
given by $[\mathcal{V}]\mapsto\ch(V,\nabla)$. For a trivial bundle $V$ with trivial connection $\nabla=d$, $\ch(V,d)=\mathrm{rk}(V)$ --- the rank of $V$. The mapping $\delta$ is surjective and for compact $X$ its kernel consists of all differences $[\mathcal{V}]-[\mathcal{F}]$ such that
$\mathcal{V}=(V,\{\nabla\})$, where $V$ is stably trivial, $V\oplus M=N$
for some trivial bundles $M$ and $N$, and $\mathcal{F}=(F,\{\nabla^{F}\})$, where $F$ is trivial and $\mathrm{rk}(F)=\mathrm{rk}(V)$. It is an outstanding problem to describe the image of the Chern character map.

The following result is crucial for our approach to the differential $K$-theory of Simons-Sullivan \cite{SS}. 
\begin{proposition} \label{Venice} The image of the Chern character map contains all exact forms. Specifically, for every exact even form $\omega$ there is a trivial vector bundle $V=X\times\C^{r}$ with a connection $\nabla=d+A$ such that
$$\ch(V,\nabla)-\ch(V,d)=\omega.$$
\end{proposition}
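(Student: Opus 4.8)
The plan is to realize $\omega$ using only \emph{trivial line bundles}, so that the resulting bundle $V=\bigoplus_\ell L_\ell$ (a finite direct sum of trivial line bundles $L_\ell=X\times\C$, each carrying a connection $d+a_\ell$) is automatically trivial and $\nabla=\bigoplus_\ell(d+a_\ell)$ has the required shape $d+A$. Since $\ch$ is additive under direct sums and the curvature of $d+a_\ell$ is the $2$-form $\d a_\ell$, one has
\[
\ch(V,\nabla)-\ch(V,d)\;=\;\sum_\ell\left(\exp\!\Big(\tfrac{\sqrt{-1}}{2\pi}\,\d a_\ell\Big)-1\right)\;=\;\sum_\ell\big(e^{\beta_\ell}-1\big),
\]
where, choosing $a_\ell=-2\pi\sqrt{-1}\,\lambda_\ell$ with $\d\lambda_\ell=\beta_\ell$, the $\beta_\ell$ range over arbitrary exact $2$-forms. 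Thus the proposition reduces to the purely form-theoretic claim that every exact even form $\omega$ equals a finite sum $\sum_\ell(e^{\beta_\ell}-1)$ with $\beta_\ell$ exact $2$-forms; note in passing that each $e^\beta-1=\beta+\tfrac12\beta\wedge\beta+\cdots$ is itself exact when $\beta=\d\lambda$ is, since $\beta^{\wedge k}=\d(\lambda\wedge\beta^{\wedge(k-1)})$. This is precisely the ``direct sum of line bundles'' approach referred to in the introduction.

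The first main step I would carry out is a lemma: for every $j\ge 1$, each exact $2j$-form on $X$ is a finite $\C$-linear combination of wedge products $\d\alpha_1\wedge\cdots\wedge\d\alpha_j$ of exact $2$-forms — equivalently, by the standard polarization identity expressing a monomial as a combination of powers in a commutative ring, a finite combination of $j$-fold wedge powers $\delta^j$ of exact $2$-forms $\delta$. Because $\d\alpha_1\wedge\cdots\wedge\d\alpha_j=\d(\alpha_1\wedge\d\alpha_2\wedge\cdots\wedge\d\alpha_j)$, it is enough to write every $(2j-1)$-form as a finite linear combination of forms $\alpha_1\wedge\d\alpha_2\wedge\cdots\wedge\d\alpha_j$. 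Locally in a chart this is immediate from the identity $f\,dx^{i_1}\wedge dx^{i_2}\wedge\cdots\wedge dx^{i_{2j-1}}=(f\,dx^{i_1})\wedge\d(x^{i_2}dx^{i_3})\wedge\cdots\wedge\d(x^{i_{2j-2}}dx^{i_{2j-1}})$, and I would globalize with a partition of unity $\{\rho_a\}$, multiplying the local coordinate factors by cutoff functions that are identically $1$, with vanishing differential, on $\operatorname{supp}\rho_a$, so that the genuinely global $1$-forms one ends up with still reconstruct $\rho_a\,\omega$. (This is the smooth-manifold counterpart of Lemma \ref{Connes-hol}.)

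The second step is to assemble $\omega$ degree by degree. Let $2m\le n$ be the top degree of $\omega$. For $j=1,2,\dots,m$ in turn, write the degree-$2j$ component of the current remainder $r_{j-1}$ — an exact $2j$-form, with $r_0=\omega$ and the degree-$2j$ part of an exact form being exact — as $\sum_\ell\delta_{j,\ell}^{\,j}$ using the lemma, and for each $\ell$ adjoin $j$ trivial line bundles with $\beta$-forms $\mu\,\zeta^t\,\delta_{j,\ell}$, $t=0,\dots,j-1$, where $\zeta=e^{2\pi\sqrt{-1}/j}$ and $\mu^j=(j-1)!$. Since $\sum_{t=0}^{j-1}\zeta^{tp}$ equals $j$ if $j\mid p$ and $0$ otherwise, the combined Chern character contribution of these $j$ line bundles is exactly $\delta_{j,\ell}^{\,j}$ in degree $2j$, with every further term in degrees $4j,6j,\dots\ge 2j+2$. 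So the new remainder $r_j$ is again exact and supported in degrees $\ge 2j+2$; after step $m$ it lives in degrees $>n$ and hence vanishes, and the direct sum of all the (finitely many) line bundles adjoined along the way is the desired $(V,\nabla)$.

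The hardest point will be the globalization inside the lemma — arranging the cutoffs so that the essentially trivial local identity really does patch to an expression of an arbitrary form as a combination of products $\alpha_1\wedge\d\alpha_2\wedge\cdots\wedge\d\alpha_j$ with \emph{global} $1$-forms $\alpha_i$. By comparison, the degree-by-degree bookkeeping in the last step is routine, the key observation being that the ``error terms'' created while handling degree $2j$ all sit in degrees $\ge 4j$, strictly above the degree just cleared, so the induction never returns to a degree it has already settled.
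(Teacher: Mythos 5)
Your argument is correct, and its two pillars coincide with the paper's: the globalization lemma you sketch (partition of unity plus cutoffs that are $\equiv 1$ near $\operatorname{supp}\rho_a$) is exactly the paper's Lemma \ref{Connes}, and the paper likewise builds $V$ as a direct sum of trivial line bundles with connection forms $-2\pi\sqrt{-1}\,\lambda$. Where you diverge is in the bookkeeping. The paper feeds each basic odd form $f_1df_2\wedge\cdots\wedge df_{2k+2}$ to a \emph{single} line bundle with $\lambda=f_1df_2+\cdots+f_{2k+1}df_{2k+2}$, uses the identity $df_1\wedge\cdots\wedge df_{2k+2}=\tfrac{1}{(k+1)!}(d\lambda)^{k+1}$ to see that $e^{d\lambda}-1$ hits the target in top degree, and disposes of the surplus terms $d\lambda+\cdots+\tfrac{1}{k!}(d\lambda)^{k}$ by a \emph{downward} induction on degree. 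You instead pass through polarization to reduce to pure powers $\delta^{j}$ (a detour, since the basic-form decomposition already hands you a power of a single exact $2$-form), then use $j$ line bundles weighted by $j$-th roots of unity to project onto the $\delta^{j}$ term exactly, pushing all errors \emph{upward} into degrees $\geq 4j$ and terminating because degrees are bounded by $\dim X$. Both schemes work; the paper's is shorter, while your roots-of-unity filter never revisits a degree and makes the error terms completely explicit.

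One small slip: stopping after step $m$ with $2m$ the top degree of $\omega$ is not enough, since the errors created at step $j$ live in degrees $4j,6j,\dots$, which can still be $\leq\dim X$ (a $2$-form on a $6$-manifold produces errors in degrees $4$ and $6$). You must continue the sweep through every even degree up to $\dim X$; your own observation that the remainder is always exact and supported strictly above the degree just cleared shows the process does terminate there.
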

The proof is based on the following simple fact (see, e.g., \cite[p. 16]{DR}), which is proved by partition of unity, or by using the Whitney embedding theorem.
\begin{lemma} \label{Connes} Every $\eta\in\mathcal{A}^{k}(X)$ can be represented as a finite sum of the basic forms $f_{1}df_{2}\wedge\dots\wedge df_{k+1}$, where
$f_{1},\dots,f_{k+1}$ are smooth functions on $X$. If the form $\eta$ is real, one can choose the basic forms such that all functions $f_{i}$ are real-valued, and if $\eta$ is zero on an open $U\Subset X$, there is a representation such that all functions $f_{i}$ vanish on $U$. 
\end{lemma}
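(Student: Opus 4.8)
The plan is to use a partition of unity to localize the statement to coordinate charts, where the result is a matter of elementary multivariable calculus. First I would choose a locally finite cover of $X$ by coordinate charts $\{U_\alpha\}$ with coordinate functions $x^1_\alpha,\dots,x^n_\alpha$ on $U_\alpha$, together with a subordinate partition of unity $\{\rho_\alpha\}$, arranged so that on the support of $\rho_\alpha$ the functions $x^i_\alpha$ extend to globally defined smooth functions on $X$ (this is possible by multiplying $x^i_\alpha$ by a bump function equal to $1$ on $\operatorname{supp}\rho_\alpha$ and supported in $U_\alpha$). Writing $\eta=\sum_\alpha \rho_\alpha\,\eta$, it suffices to treat each summand $\rho_\alpha\eta$, which is compactly supported in a single chart. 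In that chart $\rho_\alpha\eta$ is a finite $C^\infty(X)$-linear combination of monomials $dx^{i_1}_\alpha\wedge\cdots\wedge dx^{i_k}_\alpha$, so $\rho_\alpha\eta=\sum_I a_I\,dx^{i_1}_\alpha\wedge\cdots\wedge dx^{i_k}_\alpha$ with each $a_I\in C^\infty(X)$ supported in $U_\alpha$; note $a_I\,dx^{i_1}_\alpha\wedge\cdots\wedge dx^{i_k}_\alpha$ is already a basic form with $f_1=a_I$, $f_2=x^{i_1}_\alpha$, and so on. Summing over $\alpha$ and $I$ gives the desired finite representation, since the cover is locally finite and $\eta$ has (implicitly) the property that the sum is finite once we only keep nonzero terms; if $X$ is not compact one allows a locally finite sum, which is what is actually needed in the application.

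For the reality statement: if $\eta$ is real, then each $\rho_\alpha\eta$ is real, and since $x^i_\alpha$ and $\rho_\alpha$ are real-valued, the coefficients $a_I$ above are already real-valued, so every function $f_i$ in the resulting basic forms is real. For the vanishing statement: if $\eta$ vanishes on an open set $U\Subset X$, one refines the construction by choosing the partition of unity so that every $U_\alpha$ meeting $U$ is small enough, and replaces the leading function $f_1=a_I$ in each basic form by $a_I$ itself, which already vanishes on $U\cap U_\alpha$; for charts disjoint from $U$ there is nothing to check, and for charts meeting $U$ the coefficient $a_I$ vanishes on $U$ by hypothesis. Thus all the functions $f_1$ vanish on $U$, and after possibly multiplying the remaining $f_i$ by a cutoff that is $1$ on $\operatorname{supp} f_1$ and vanishes near the part of $U$ in question, one can arrange all $f_i$ to vanish on $U$ as well.

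The only mildly delicate point — and the place I would be most careful — is the bookkeeping that keeps everything a \emph{finite} sum (in the compact case) and a \emph{locally finite} sum in general, together with the simultaneous handling of the reality and vanishing refinements without the cutoffs introduced for one property destroying the other. The underlying identity $f_1\,df_2\wedge\cdots\wedge df_{k+1}$ with $f_1$ a coefficient and $f_2,\dots,f_{k+1}$ coordinate functions is trivial; the content is purely in organizing the partition of unity and the extensions of coordinate functions so that all claims hold at once. I expect no genuine obstacle here, only routine care.
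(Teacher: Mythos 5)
Your compact-case argument is essentially the paper's: finite coordinate cover, subordinate partition of unity, coordinates globalized by bump functions equal to $1$ on $\operatorname{supp}\rho_\alpha$, and each local monomial read off as a basic form. The reality statement is then immediate, exactly as in the paper. However, there are two genuine gaps relative to the lemma as stated. First, the finiteness claim for non-compact $X$: the lemma asserts a \emph{finite} sum with no compactness hypothesis, and finiteness is not a cosmetic point, since in the proof of Proposition \ref{Venice} each basic form contributes a line bundle and the resulting trivial bundle must have finite rank; a merely locally finite sum would produce an infinite-rank object, so your remark that local finiteness ``is what is actually needed'' does not hold. The paper closes this case by a different device: Whitney-embed $X$ into $\R^{N}$ and use the tubular neighbourhood (retraction) to show that the pullback $f^{\ast}:\mathcal{A}(\R^{N})\rightarrow\mathcal{A}(X)$ is surjective; on $\R^{N}$ every $k$-form is already a finite sum of at most $\binom{N}{k}$ basic forms in the global coordinates, and pullback preserves basic forms. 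You would need to add this (or restrict the statement to compact $X$).

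Second, your repair of the vanishing statement does not work as written. A cutoff that is identically $1$ on $\operatorname{supp}f_{1}$ and vanishes on (the relevant part of) $U$ cannot exist whenever $\operatorname{supp}f_{1}$ meets $\overline{U}$, which is the typical situation: the coefficient $f_{1}$ vanishes on $U$ but its support generally contains boundary points of $U$. Moreover, multiplying the functions $f_{i}$, $i\geq 2$, by a cutoff changes $df_{i}$ and hence destroys the identity $f_{1}\,df_{2}\wedge\dots\wedge df_{k+1}=a_{I}\,dx^{i_{1}}\wedge\dots\wedge dx^{i_{k}}$ away from where the cutoff is locally constant, so this must be checked rather than asserted. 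The vanishing property should instead be built into the choice of the cover, as in the paper's construction, where every function produced (both the coefficients $\rho_\alpha f_{\alpha,I}$ and the truncated coordinates $x^{i}b_\alpha$) is supported inside its chart: arrange the coordinate charts so that each one either lies inside $U$ (then the coefficients of $\eta$ vanish identically there and the term is dropped) or is disjoint from the open set on which vanishing is required, e.g.\ by refining the cover subordinate to $\{U,\,X\setminus\overline{U'}\}$ with $U'\Subset U$; then all functions vanish on $U'$ by support reasons alone, which is the form of the statement actually used later (one is free to shrink the neighbourhood in Remark 2.5 and Corollary \ref{V-real}). In short: vanishing should come from supports determined by the cover, not from a posteriori cutoffs.
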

\begin{proof}[Proof of Proposition \ref{Venice}] Induction by the degree in $d \A^{\mathrm{odd}}(X)\subset \A^{\mathrm{even}}(X)$. According to Lemma \ref{Connes}, it is sufficient to consider only basic forms in $\A^{\mathrm{odd}}(X)$.

For a basic $1$-form $\alpha=f_{1}df_{2}$ we have $\omega=d\alpha=df_{1}\wedge df_{2}$, so that
$$\ch(L,\nabla)-\ch(L,d)=\ch(L,\nabla)-1=\omega,$$
where $L$ is a trivial line bundle over $X$ with connection $\nabla=d-2\pi\sqrt{-1}f_{1}df_{2}$ and curvature $\nabla^2 = -2\pi \sqrt{-1} df_1 \wedge df_2$. 

Now suppose that all exact forms of degree $\leq 2k$ are in the image of $\ch$. 
For a basic $(2k+1)$-form $\alpha=f_{1}df_{2}\wedge\cdots\wedge df_{2k+2}$ we have $\omega =d\alpha=df_{1}\wedge df_{2}\wedge\cdots\wedge df_{2k+2}$, which can be also written as
$$\omega =\frac{1}{(k+1)!}(df_{1}\wedge df_{2}+\dots+df_{2k+1}\wedge df_{2k+2})^{k+1}.$$
Let $V$ be a trivial line bundle over $X$ with 
$$\nabla=d -2\pi\sqrt{-1}(f_{1}df_{2}+\dots + f_{2k+1}df_{2k+2}),$$
so that
$$\nabla^{2}=-2\pi\sqrt{-1}(df_{1}\wedge df_{2}+\dots + df_{2k+1}\wedge df_{2k+2}).$$
Then $\ch(V,\nabla)-1-\omega $ is an exact form of degree $\leq 2k$, and by induction is in the image of $\ch$.
\end{proof}
\begin{remark} If the form $\omega$ is real then the connection $\nabla$ in Proposition \ref{Venice} is compatible with the metric on $V$ given by the standard
Hermitian metric on $\C^{r}$.
\end{remark}
\begin{remark} It immediately follows from the second statement of Lemma \ref{Connes} and the proof of Proposition \ref{Venice} that if form $\omega$ vanishes on open $U\subset X$, then the connection $\nabla=d+A$ can be chosen such that $A=0$ on $U$.
\end{remark}
\begin{corollary} \label{V-real} For every $\alpha\in\widetilde{\mathcal{A}}^{\mathrm{odd}}(X)$ there is a trivial vector bundle $V$ with connection $\nabla$
such that $\CS(\nabla,d)=\alpha$.
\end{corollary}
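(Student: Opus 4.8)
The plan is to use functoriality of the Chern--Simons form to reduce the statement to the case of a Euclidean space, where the odd-degree de Rham cohomology vanishes and Proposition \ref{Venice} applies directly.

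First I would fix a representative $\alpha\in\mathcal{A}^{\mathrm{odd}}(X)$ of the given element of $\widetilde{\mathcal{A}}^{\mathrm{odd}}(X)$ and, using Lemma \ref{Connes}, write it as a finite sum of basic forms, $\alpha=\sum_{j}f^{j}_{1}\,df^{j}_{2}\wedge\dots\wedge df^{j}_{2m_{j}}$ with $f^{j}_{i}\in C^{\infty}(X,\C)$. Assembling all the functions $f^{j}_{i}$ into one smooth map $F\colon X\to\C^{M}$ and choosing coordinates $y_{1},\dots,y_{M}$ on $\C^{M}$ so that the $F^{\ast}y_{l}$ run through the $f^{j}_{i}$, one has $\alpha=F^{\ast}\beta$, where $\beta$ is the corresponding finite sum of basic odd forms in the $y_{l}$. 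Thus it is enough to realize the class of $\beta$ as a Chern--Simons class on $\C^{M}$ and pull the construction back to $X$ along $F$.

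On $\C^{M}$ the even form $d\beta$ is exact (being the differential of $\beta$), so Proposition \ref{Venice}, applied to the manifold $\C^{M}$, produces a trivial bundle $V=\C^{M}\times\C^{r}$ with a connection $\nabla=d+A$ for which $\ch(V,\nabla)-\mathrm{rk}(V)=d\beta$. Then, by \eqref{cs-1}, the odd form $\cs(\nabla,d)-\beta$ is closed, and since $\C^{M}$ is contractible it is in fact exact; hence $\CS(\nabla,d)$ equals the class of $\beta$ in $\widetilde{\mathcal{A}}^{\mathrm{odd}}(\C^{M})$. Pulling back along $F$, and noting that $F^{\ast}$ sends $d\mathcal{A}^{\mathrm{even}}(\C^{M})$ into $d\mathcal{A}^{\mathrm{even}}(X)$ and so descends to the quotients, I obtain the connection $F^{\ast}\nabla$ on the trivial bundle $F^{\ast}V=X\times\C^{r}$ with $\CS(F^{\ast}\nabla,d)=F^{\ast}\CS(\nabla,d)=F^{\ast}[\beta]=[F^{\ast}\beta]=[\alpha]=\alpha$, as required. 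The identity $\CS(F^{\ast}\nabla,d)=F^{\ast}\CS(\nabla,d)$ is the functoriality of the Chern--Simons form under pullback; for the construction \eqref{cs-def} it is immediate, since $(F\times\mathrm{id}_{S^{1}})^{\ast}\widetilde{\nabla}$ serves as a legitimate auxiliary connection on the pulled-back bundle over $X\times S^{1}$ and integration over the $S^{1}$-fibre commutes with pullback in the base.

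I expect the one real subtlety to be the step ``closed $\Rightarrow$ exact''. Attempted directly on $X$, Proposition \ref{Venice} only yields a connection $\nabla_{1}$ with $\cs(\nabla_{1},d)-\alpha$ closed but in general not exact, and realizing a non-exact closed odd form as a Chern--Simons form would force one to exhibit connections with trivial Chern character form and non-trivial Chern--Simons form, e.g.\ gauge transforms $d\mapsto g^{-1}dg+d$ of the trivial connection. Routing the argument through the contractible space $\C^{M}$ and pulling back by functoriality makes this extra construction unnecessary.
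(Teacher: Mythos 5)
Your argument is correct, but it reaches the conclusion by a genuinely different route than the paper. The paper's proof stays on $X\times S^{1}$: it extends $\alpha$ to an odd form $\Theta$ on $X\times S^{1}$ with $i_{\pi}^{\ast}(\Theta)=\alpha$ and $i_{\theta}^{\ast}(\Theta)=0$ near $\theta=0$, applies Proposition \ref{Venice} there to the exact form $-d\Theta$ (arranging $A=0$ near $\theta=0$ so that $\nabla^{0}=d$, per Remark 2.5), and then fibre-integrates from $\pi$ to $2\pi$; the identity $\int_{\pi}^{2\pi}d_{\theta}\Theta=-\alpha$ is what converts the a priori only-closed discrepancy into an honest $d_{x}$-exact one. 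You instead resolve that same ``closed versus exact'' obstruction --- which you correctly identify as the crux --- by factoring $\alpha=F^{\ast}\beta$ through a map $F:X\to\C^{M}$ built from Lemma \ref{Connes}, invoking the Poincar\'{e} lemma on the contractible target, and pulling back. Your route needs two ingredients the paper's proof does not use at this point: the decomposition of Lemma \ref{Connes} applied directly to $\alpha$ (the paper only uses it inside the proof of Proposition \ref{Venice}), and the functoriality of $\CS$ under pullback, which the paper asserts but never proves; your justification of the latter (pull back the interpolating connection $\tilde{\nabla}$ and use that fibre integration commutes with base pullback, then invoke Lemma 2.1 for independence of the choice) is the right one and is worth recording explicitly. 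What each approach buys: the paper's construction is self-contained given the $S^{1}$ definition \eqref{cs-def} and transfers verbatim to the holomorphic setting (Corollary \ref{V-complex} is its mirror image with $\PP^{1}$ and $\log|z|^{2}$ in place of $S^{1}$ and $g(\theta)$, where a contractible-target argument would not be available because the relevant functions and forms must remain compatible with the complex structure); yours is arguably more conceptual, isolating the reduction to a contractible ``universal'' model, and it yields the slightly stronger statement that $\nabla$ may be taken to be a pullback of a connection over $\C^{M}$.
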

\begin{proof} For the given $\alpha\in\mathcal{A}^{\mathrm{odd}}(X)$ let $\Theta\in\mathcal{A}^{\mathrm{odd}}(X\times S^{1})$ be such that under the inclusion map $i_{\theta}: X\rightarrow X\times S^{1}$ one has  $i_{\pi}^{\ast}(\Theta)=\alpha$ and $i_{\theta}^{\ast}(\Theta)=0$ for all $\theta$ in some neighborhood of $0$. Applying Proposition \ref{Venice} to the manifold $X\times S^{1}$ and the exact even form $-d\Theta$, we have
$$\ch(\tilde{V},\tilde\nabla)-\mathrm{rk}(\tilde{V})=-(d_{x}+d_{\theta})\Theta.$$
Integrating over $\theta$ from $\pi$ to $2\pi$ we get
$$\alpha=\cs(\nabla^{1},\nabla^{0}) +d_{x}\int_{\pi}^{2\pi}\Theta,$$
for connections $\nabla^{0}=i_{0}^{\ast}(\tilde{\nabla})$ and $\nabla^{1}=i_{\pi}^{\ast}(\tilde\nabla)$ on a trivial bundle $V$ --- a pullback of the trivial bundle $\tilde{V}$ to $X$.  Finally, it follows from Remark 2.4 that one can choose the connection $\tilde\nabla$ on $\tilde{V}$ such that $i_{0}^{\ast}(\tilde{\nabla})=d$. Thus putting $\nabla=\nabla^{1}$ we obtain $\CS(\nabla,d)=\alpha\!\!\!\mod\!d\mathcal{A}^{\mathrm{even}}(X)$.
\end{proof}
\begin{remark} Corollary \ref{V-real} gives a somewhat stronger form of Proposition 2.6 in \cite{SS}, the so-called ``Venice lemma'' of J. Simons\footnote{D. Sullivan, private communication.}.
\end{remark}

Corollary \ref{V-real} can be used to give a different proof of Theorem 1.15 in \cite{SS}, that does not rely on the existence of universal connections and works for the non-compact case as well.

Namely, following \cite{SS}, let
$$\Str_{\mathrm{ST}}(X)=\{[\mathcal{V}]=[(V,\{\nabla\})]\in\Str(X)\,|\,\text{$V$ is stably trivial}\}$$ 
be the stably trivial sub-semigroup of $\Str(X)$, and for $[\mathcal{V}]\in\Str_{\mathrm{ST}}(X)$ define
$$\widehat{\CS}([\mathcal{V}])=\CS(\nabla^{N},\nabla\oplus\nabla^{F})\in\mathcal{A}^{\mathrm{odd}}(X)/d\mathcal{A}^{\mathrm{even}}(X),$$
where $V\oplus F=N$ with trivial bundles $F$ and $N$, and $\nabla^{F}$, $\nabla^{N}$ are flat connections on these bundles. According to Proposition 2.4 in \cite{SS}, for another choice of trivial bundles $\bar{F}$ and $\bar{N}$ with flat connections $\nabla^{\bar{F}}$, $\nabla^{\bar{N}}$ we have
$$\CS(\nabla^{N},\nabla\oplus\nabla^{F})-\CS(\nabla^{\bar{N}},\nabla\oplus\nabla^{\bar{F}})\in\mathcal{T}(X),$$
where $\mathcal{T}(X)$ is a subgroup in $\widetilde{\mathcal{A}}^{\mathrm{odd}}(X)$ consisting of $\CS(\nabla,\nabla')$
for all trivial bundles $F$ and flat connections $\nabla$, $\nabla'$ on $F$.
Therefore, the mapping $\widehat{\CS}: \Str_{\mathrm{ST}}(X)\rightarrow \widetilde{\mathcal{A}}^{\mathrm{odd}}(X)
/\mathcal{T}(X)$ is a well-defined homomorphism of semigroups. 

According to Corollary \ref{V-real} the map $\widehat{\CS}$ is surjective, and according to Proposition 2.5 in \cite{SS}, $\ker\widehat{\CS}=\Str_{\mathrm{SF}}(X)$ --- the subgroup of \emph{stably flat} structured bundles. By definition, $[\mathcal{V}]\in\Str_{\mathrm{ST}}(X)$ is stably flat if 
$$\mathcal{V}\oplus\mathcal{F}=\mathcal{N},$$
where
$\mathcal{F}=(F,\{\nabla^{F}\})$ and $\mathcal{N}=(N,\{\nabla^{N}\})$ are trivial bundles with equivalence classes of flat connections. Since map $\widehat{\CS}$ is onto and $\widetilde{\mathcal{A}}^{\mathrm{odd}}(X)/\mathcal{T}(X)$ is a group,
for every $[\mathcal{V}]\in\Str_{\mathrm{ST}}(X)$ there is $[\mathcal{W}]\in\Str_{\mathrm{ST}}(X)$ such that $[\mathcal{V}]+[\mathcal{W}]\in\Str_{\mathrm{SF}}(X)$. This introduces a group structure on the coset space $\Str_{\mathrm{ST}}(X)/\Str_{\mathrm{SF}}(X)$, and we arrive at the following statement.
\begin{proposition} \label{group} The map $\widehat{\CS}$ induces a group isomorphism
$$\widehat{\CS}:\Str_{\mathrm{ST}}(X)/\Str_{\mathrm{SF}}(X)\rightarrow \widetilde{\mathcal{A}}^{\mathrm{odd}}(X)/\mathcal{T}(X).$$
\end{proposition}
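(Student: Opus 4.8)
The plan is to put together the pieces already established in this section and to verify the standard chain of facts that upgrades a surjective semigroup homomorphism to a group isomorphism between the appropriate quotients. Three things must be checked: that $\widehat{\CS}$ descends to the coset space $\Str_{\mathrm{ST}}(X)/\Str_{\mathrm{SF}}(X)$, that the induced map is surjective, and that it is injective, i.e.\ that $\ker\widehat{\CS}$ is exactly $\Str_{\mathrm{SF}}(X)$.

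First I would recall that $\widehat{\CS}\colon\Str_{\mathrm{ST}}(X)\to\widetilde{\A}^{\mathrm{odd}}(X)/\mathcal T(X)$ is already known to be a well-defined homomorphism of semigroups: well-definedness with respect to the auxiliary flat bundles is Proposition 2.4 in \cite{SS}, and additivity $\widehat{\CS}(\mathcal V\oplus\mathcal W)=\widehat{\CS}(\mathcal V)+\widehat{\CS}(\mathcal W)$ follows from additivity of $\ch$ under direct sums together with the integral formula \eqref{cs-2} and Lemma 2.1. By the very definition of a stably flat structured bundle, $\widehat{\CS}$ vanishes on $\Str_{\mathrm{SF}}(X)$: if $\mathcal V\oplus\mathcal F=\mathcal N$ with $\mathcal F,\mathcal N$ flat, then the representing form $\CS(\nabla^N,\nabla\oplus\nabla^F)$ lies in $\mathcal T(X)$ by the description of $\mathcal T(X)$. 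Hence $\widehat{\CS}$ factors through $\Str_{\mathrm{ST}}(X)/\Str_{\mathrm{SF}}(X)$. I would then use the remark established just before the statement that this coset space carries a genuine group structure: since $\widehat{\CS}$ is onto and its target is a group, every $\mathcal V\in\Str_{\mathrm{ST}}(X)$ admits a ``virtual inverse'' $\mathcal W$ with $\mathcal V\oplus\mathcal W\in\Str_{\mathrm{SF}}(X)$, which supplies inverses for the coset operation induced by $\oplus$. Thus the induced map on the quotient is a homomorphism of groups.

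For surjectivity I would invoke Corollary \ref{V-real}: given a class $[\alpha]\in\widetilde{\A}^{\mathrm{odd}}(X)$ there is a trivial bundle $V$ with a connection $\nabla$ such that $\CS(\nabla,d)=\alpha$ modulo $d\A^{\mathrm{even}}(X)$. Taking $F=N=X\times\C^{\,\mathrm{rk}(V)}$ with the flat connections $\nabla^F=\nabla^N=d$, the pair $\mathcal V=(V,\{\nabla\})$ lies in $\Str_{\mathrm{ST}}(X)$ and satisfies $\widehat{\CS}(\mathcal V)=[\alpha]$ in $\widetilde{\A}^{\mathrm{odd}}(X)/\mathcal T(X)$. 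Hence the induced homomorphism on $\Str_{\mathrm{ST}}(X)/\Str_{\mathrm{SF}}(X)$ is surjective.

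The main point, and the only genuinely non-formal input, is the identification of the kernel: one must show that $\widehat{\CS}(\mathcal V)\in\mathcal T(X)$ already forces $\mathcal V\in\Str_{\mathrm{SF}}(X)$. This is Proposition 2.5 in \cite{SS}, which I would cite. Its mechanism, using the description of $\mathcal T(X)$ from Lemma 2.3 in \cite{SS}, is that if $\CS(\nabla^N,\nabla\oplus\nabla^F)=g^{\ast}(\Theta)$ modulo exact forms for some smooth $g\colon X\to\mathrm{GL}(\infty)$, then after stabilizing $\mathcal V$ by a trivial bundle equipped with the flat connection $d^{\,g}=d+g^{-1}dg$ one can arrange the representing Chern--Simons form to vanish identically; by definition this makes the stabilized structured bundle flat, so $\mathcal V$ is stably flat. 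Granting this, the induced map has trivial kernel, and combined with surjectivity and the homomorphism property we conclude that $\widehat{\CS}$ induces the asserted group isomorphism. I expect everything except this kernel computation to be routine bookkeeping with the definitions and with \eqref{cs-2}; the kernel statement, quoted from \cite{SS}, is the conceptual heart of the matter.
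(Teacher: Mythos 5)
Your argument follows essentially the same route as the paper: surjectivity via Corollary \ref{V-real}, the identification $\ker\widehat{\CS}=\Str_{\mathrm{SF}}(X)$ quoted from Proposition 2.5 of \cite{SS}, well-definedness from Proposition 2.4 of \cite{SS}, and the group structure on $\Str_{\mathrm{ST}}(X)/\Str_{\mathrm{SF}}(X)$ obtained from surjectivity onto the group $\widetilde{\mathcal{A}}^{\mathrm{odd}}(X)/\mathcal{T}(X)$. The only blemish is a harmless bookkeeping slip in the surjectivity step (if $F=X\times\C^{\mathrm{rk}(V)}$ then $N=V\oplus F$ has rank $2\,\mathrm{rk}(V)$, and the representing form is $\CS(\nabla^{N},\nabla\oplus\nabla^{F})=-\alpha$ up to $\mathcal{T}(X)$, which is immaterial since $\alpha$ is arbitrary); otherwise the proposal matches the paper's proof.
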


We obtain Theorem 1.15 in \cite{SS} as an immediate corollary of this result.
\begin{corollary} \label{C-S-nc} Every structured bundle over $X$ has a structured inverse, i.e., 
for every $[\mathcal{V}]=[(V,\{\nabla\})]\in\Str(X)$ there is $[\mathcal{W}]=[(W,\{\nabla^{W}\})]\in\Str(X)$ such that
$$[\mathcal{V}]+[\mathcal{W}]=[\mathcal{N}],$$
where $\mathcal{N}=(N,\{\nabla^{N}\})$ is a trivial bundle with flat connection.
\end{corollary}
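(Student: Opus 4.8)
The plan is to deduce the statement directly from Proposition 2.2 together with the surjectivity of $\widehat{\CS}$ (Corollary \ref{V-real}), both of which ultimately rest only on Proposition \ref{Venice}; in particular the argument appeals to no universal connections, which is the whole point of giving a new proof of Theorem 1.15 of \cite{SS}. The strategy is threefold: first replace $\mathcal{V}$ by a structured bundle whose underlying bundle is trivial, then use the group structure of $\widetilde{\mathcal{A}}^{\mathrm{odd}}(X)/\mathcal{T}(X)$ to produce a stably flat complement, and finally absorb the stabilization into the inverse.

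Concretely, since $X$ is compact choose a bundle $V'$ with $V\oplus V'=K$ for some trivial bundle $K$, equip $V'$ with an arbitrary connection $\nabla^{V'}$, and set $\mathcal{V}'=(V',\{\nabla^{V'}\})$. Then $\mathcal{K}:=\mathcal{V}\oplus\mathcal{V}'$ has trivial underlying bundle $K$, hence $\mathcal{K}\in\Str_{\mathrm{ST}}(X)$. Since $\widehat{\CS}:\Str_{\mathrm{ST}}(X)\to\widetilde{\mathcal{A}}^{\mathrm{odd}}(X)/\mathcal{T}(X)$ is a homomorphism of semigroups onto a group, one may pick $\mathcal{U}\in\Str_{\mathrm{ST}}(X)$ with $\widehat{\CS}(\mathcal{U})=-\widehat{\CS}(\mathcal{K})$; then $\widehat{\CS}(\mathcal{K}\oplus\mathcal{U})=0$, so $\mathcal{K}\oplus\mathcal{U}\in\ker\widehat{\CS}=\Str_{\mathrm{SF}}(X)$ (this is exactly the consequence of Proposition 2.2 already recorded in the text). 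By the definition of stable flatness there are trivial bundles $F$ and $N$ carrying flat connections with $\mathcal{K}\oplus\mathcal{U}\oplus\mathcal{F}=\mathcal{N}$ in $\Str(X)$, where $\mathcal{F}=(F,\{\nabla^{F}\})$ and $\mathcal{N}=(N,\{\nabla^{N}\})$. Substituting $\mathcal{K}=\mathcal{V}\oplus\mathcal{V}'$ yields $\mathcal{V}\oplus\mathcal{W}=\mathcal{N}$ with $\mathcal{W}:=\mathcal{V}'\oplus\mathcal{U}\oplus\mathcal{F}$, which is the required structured inverse.

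I do not expect any genuine obstacle here: the only substantive input is the surjectivity of $\widehat{\CS}$, i.e.\ the ``Venice lemma'' in the form of Proposition \ref{Venice}, and once that is in hand the existence of structured inverses is a formal consequence. Compactness of $X$ is used only to produce the complement $V'$ and hence to reduce to $\Str_{\mathrm{ST}}(X)$, and the trivial flat bundle $F$ serves only to discard the stabilization built into the definition of $\Str_{\mathrm{SF}}(X)$.
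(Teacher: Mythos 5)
Your proposal is correct and follows essentially the same route as the paper: stabilize $\mathcal{V}$ by a complement to get a structured bundle with trivial underlying bundle, invoke the surjectivity of $\widehat{\CS}$ onto the group $\widetilde{\mathcal{A}}^{\mathrm{odd}}(X)/\mathcal{T}(X)$ together with $\ker\widehat{\CS}=\Str_{\mathrm{SF}}(X)$ (the content of Proposition 2.2) to find a stably flat completion, and then absorb the complement, the completing bundle, and the flat stabilizer into $\mathcal{W}$. This matches the paper's proof step for step, differing only in notation.
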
 
\begin{proof} For $[\mathcal{V}]=[(V,\{\nabla\})]\in\Str(X)$ let $U$ be such that $V\oplus U=F$ --- a trivial bundle over $X$.
Then $[\mathcal{F}]=[(F,\{\nabla\oplus\nabla^{U}\})]\in\Str_{\mathrm{ST}}(X)$ for any choice of connection $\nabla^{U}$ on $U$. By Proposition \ref{group}, there exists $[\mathcal{H}]=[(H,\{\nabla^{H}\})]\in\Str_{\mathrm{ST}}(X)$
such that $[\mathcal{F}]+[\mathcal{H}]\in\Str_{\mathrm{SF}}(X)$, i.e., there are trivial bundles
$M$ and $N$ with flat connections $\nabla^{M}$ and $\nabla^{N}$ such that
$\mathcal{F}\oplus\mathcal{H}\oplus\mathcal{M}=\mathcal{N}$. Putting 
$$\mathcal{W}=(U\oplus H\oplus M,\{\nabla^{U}\oplus\nabla^{H}\oplus\nabla^{M}\}),$$
we obtain $[\mathcal{V}]+[\mathcal{W}]=[\mathcal{N}]$.
\end{proof}
Using Corollary \ref{C-S-nc} we conclude that all results in \cite{SS} hold for the non-compact case as well. 
\section{Holomorphic vector bundles over a complex manifold}\label{Sect:3}
\subsection{Chern-Weil theory}\label{Sub:3.1}
Let $(E,h)$ be a holomorphic Hermitian vector bundle --- a holomorphic vector bundle of rank $r$ over a complex manifold $X$, $\dim_{\C}X=n$, with a Hermitian metric $h$. In what follows we always use a local trivialization of $E$ ---   an open cover $\{U_{\alpha}\}_{\alpha\in A}$ of $X$ and holomorphic transition functions $g_{\alpha \beta}:U_{\alpha}\cap U_{\beta}\rightarrow\mathrm{GL}(r,\C)$, satisfying the cocycle condition.
 In these terms, a Hermitian metric $h$ on $E$ is the collection $h=\{h_{\alpha}\}_{\alpha\in A}$, where $h_{\alpha}$ are positive-definite Hermitian $r\times r$ matrix-valued functions on $U_{\alpha}$, satisfying
$$h_{\beta}= g_{\alpha\beta}^{\ast}h_{\alpha}g_{\alpha\beta}\quad\text{on}\quad U_{\alpha}\cap U_{\beta},$$
and $g^{\ast}$ stands for the Hermitian conjugation.

Denote by $\nabla$ the canonical connection on the holomorphic Hermitian bundle $(E,h)$. In terms of a local trivialization it is given by the collection $\nabla=\{\nabla_{\alpha}\}_{\alpha\in A}$,
$$\nabla_{\alpha}=d+A_{\alpha}=\del+\delb+A_{\alpha}^{1,0}+A_{\alpha}^{0,1},$$
where $A_{\alpha}^{0,1}=0$ and $A_{\alpha}^{1,0}=h_{\alpha}^{-1}\del h_{\alpha}$ are $r\times r$ matrix-valued $(1,0)$-forms on $U_{\alpha}$, satisfying
$$A_{\beta}=g_{\alpha\beta}^{-1}A_{\alpha}g_{\alpha\beta} + g_{\alpha\beta}^{-1}\del g_{\alpha\beta}\quad\text{on}\quad U_{\alpha}\cap U_{\beta}.$$

The curvature of the canonical connection $\nabla=d+A$ is a collection $\Theta=\{\Theta_{\alpha}\}_{\alpha\in A}$, where $\Theta_{\alpha}=\delb A_{\alpha}$
are $r\times r$ matrix-valued $(1,1)$-forms on $U_{\alpha}$, satisfying
\begin{equation} \label{inv}
\Theta_{\beta}=g_{\alpha\beta}^{-1}\Theta_{\alpha}g_{\alpha\beta}\quad\text{on}\quad U_{\alpha}\cap U_{\beta}.
\end{equation}
Chern-Weil theory associates to any polynomial $\Phi$ on GL$(r,\mathbb{C})$, invariant under conjugation, a collection $\{\Phi(\Theta_{\alpha})\}_{\alpha\in A}$ which, according to \eqref{inv},
defines a global differential form $\Phi(\Theta)$ on $X$. Special cases of this construction are the total Chern form $c(E,h)$, the Chern character form $\ch(E,h)$, and the Todd form $\td(E,h)$ of a holomorphic Hermitian vector bundle $(E,h)$, which are respectively defined by 
\begin{align*}
c(E,h) & = \det\left(I + \frac{\sqrt{-1}}{2\pi}\,\Theta\right)= \sum_{k=0}^{r}c_{k}(E,h),\\
\ch(E,h) & =\Tr\left\{\exp\left(\frac{\sqrt{-1}}{2\pi}\,\Theta\right)\right\}=\sum_{k=0}^{n}\ch_{k}(E,h),\\
\intertext{and}
\td(E,h) & = \det \left\{ \dfrac{\dfrac{\sqrt{-1}}{2\pi}\Theta}{1-\exp\left(-\dfrac{\sqrt{-1}}{2\pi}\Theta\right)} \right\}=1+\sum_{k=1}^{r}\td_{k}(E,h).
\end{align*}
These differential forms are $\del$ and $\delb$ closed. 

For a holomorphic Hermitian bundle $(E,h)$ denote by $(E^{*},h_{*})$ the dual bundle $E^{*}$ with the induced metric $h_{*}=(h^{-1})^{t}$, and by $(\Lambda ^l E^{*}, \wedge^{l}h_{*})$ ---  corresponding $l$-th exterior powers of the bundle $(E^{*},h_{*})$ with Hermitian metrics $\wedge^{l} h_{*}$, induced by the metric $h_{*}$. The following formula provides a relation between Chern character forms and Todd forms, which promotes the well-known result for the cohomology classes (see \cite[Theorem 10.1.1]{Hirz}) to the level of forms.
\begin{lemma} \label{Hirzerbruch} Let $(E,h) $ be a holomorphic Hermitian vector bundle of rank $r$ with the metric $ h$. Then the following identity holds
$$\sum _{l=0} ^{r} (-1)^l \ch (\Lambda ^l E^{*},\wedge^{l}h_{*}) = \td(E,h)^{-1} c_r (E,h).$$
\end{lemma}
\begin{proof}
Consider the universal identity 
$$\det(I-A) = \sum _{l=0} ^{r} (-1)^l \Tr(\wedge^l A),$$ 
which holds for every $r \times r$ matrix $A$ over a commutative ring (see, e.g., \cite[p. 402]{GH}). Rewriting it as
$$\sum _{l=0} ^{r} (-1)^l \Tr(\wedge^l A) =\det A \det\left(\frac{I-A}{A}\right),$$ 
and replacing $A$ by $\exp \left(-\frac{\sqrt{-1}} {2\pi}\Theta\right)$, where $\Theta$ is the curvature of the canonical connection on $(E,h)$, we get
\begin{gather*}
 \sum _{l=0} ^{r} (-1)^l \Tr\left\{\Lambda^l \exp \left(-\frac{\sqrt{-1}} {2\pi}\Theta\right) \right\} =c_r (E,h) \td(E,h)^{-1}.
\end{gather*}
It remains to prove that $\ch (\Lambda ^l E ^{*})= \Tr \left( \Lambda ^l \exp \left(-\frac{\sqrt{-1}}{2\pi}\Theta\right) \right)$.
For this aim, consider the short exact sequence 
$$0\rightarrow \Lambda ^l E ^{*} \rightarrow \otimes ^l E ^{*} \rightarrow Q \rightarrow 0,$$ 
where $Q$ is the quotient vector bundle.  The bundle $\otimes^{l}E^{*}$ splits holomorphically into direct sum of orthogonal subbundles because there is a canonical section $\mathrm{Alt}: \otimes ^l E ^{*} \rightarrow \Lambda ^l E ^{*}$ which maps a tensor to its totally antisymmetric part. It induces a map  from $\mathrm{End}(\otimes ^l E ^{*})$ to $\mathrm{End}(\Lambda ^l E ^{*})$, which we continue to denote by  $\mathrm{Alt}$. We have $\Theta _{\Lambda ^l E ^{*}} = \mathrm{Alt} (\Theta _{\otimes ^l E ^{*}})$, which implies
\begin{gather*}
\ch (\Lambda ^l E^{*}) = \Tr \left\{\exp \left(\frac{\sqrt{-1}}{2\pi}\,\Theta _{\Lambda^l E^{*}}\right) \right\}\\
 =\Tr \left\{ \mathrm{Alt}\exp \left(-\frac{\sqrt{-1}}{2\pi}\sum_{k=1}^{l}\Theta_{k}\right)\right\}
 = \Tr \left\{\Lambda ^l \exp \left(-\frac{\sqrt{-1}}{2\pi}\Theta\right) \right\},
\end{gather*}  
where $\Theta_{k}=I\otimes\cdots\otimes I\otimes\Theta\otimes I\otimes\cdots\otimes I$, with $\Theta$ being the $k$-factor
of the $l$-fold tensor product.
\end{proof}

\subsection{Bott-Chern secondary forms}\label{Sub:3.2}
Let $h_{1}$ and $h_{2}$ be two Hermitian metrics on a holomorphic vector bundle $E$ over a complex manifold $X$. In the classic paper \cite{BC}, Bott and Chern showed the
existence of certain secondary characteristic forms. Their construction of these so-called Bott-Chern forms was generalized by Bismut, Gillet, and Soul\'{e} \cite{BGS}. The Bott-Chern form associated to an invariant polynomial $\Phi$ is an even differential form $\widetilde{\Phi}(E;h_{1},h_{2})\in\tilde{\A}(X,\C)=\A(X,\C)/(\im\,\del +\im\,\delb)$, satisfying
\begin{equation} \label{bc-def-1}
\Phi(E,h_{2}) - \Phi(E,h_{1}) =\frac{\sqrt{-1}}{2\pi}\,\delb\del\,\widetilde{\Phi}(E;h_{1},h_{2})
\end{equation}
and the functorial property 
\begin{equation} \label{bc-funct}
\widetilde{\Phi}(f^{*}(E),f^{*}(h_{1}),f^{*}(h_{2}))=f^{*}(\widetilde{\Phi}(E;h_{1},h_{2}))
\end{equation}
for holomorphic maps $f: Y\rightarrow X$ of complex manifolds. In \cite{BC} these forms were also defined for short exact sequences of hermitian holomorphic vector bundles. Namely, let $\cE$ 
$$ \begin{CD}
 0 @>>> F @>i>> E  @>p>> H  @>>> 0
 \end{CD} $$
be such an exact sequence, where holomorphic bundles $F, E$ and $H$ are equipped with Hermitian metrics $h_{F}, h_{E}$ and $h_{H}$. Similar to \eqref{bc-def-1}, the Bott-Chern form for an invariant polynomial $\Phi$ satisfies the equation
$$\Phi(E,h_{E}) - \Phi(F\oplus H,h_{F}\oplus h_{H})  =\frac{\sqrt{-1}}{2\pi}\,\delb\del\,\widetilde{\Phi}(\cE;h_{E},h_{F},h_{H}),$$
and the functorial property. It vanishes when the exact sequence $\cE$ holomorphically splits and $h_{E}=h_{F}\oplus h_{H}$.                   

For polynomials $\Phi$ corresponding to the Chern character form $\ch$ and total Chern form $c$, we denote, respectively, the corresponding Bott-Chern forms by $\bc$ and $\tilde{c}$.

\begin{remark}
In the smooth manifold case, for linear homotopy of connections $\nabla_{t}$, it possible to integrate over $t$ in the homotopy formula and obtain explicit formulas for the Chern-Simons forms (see, e.g., equation (2.1) in \cite{SS}). In the complex manifold case the situation is more complicated. It is already mentioned in the remark in  \cite[Sect. 3]{BC} that even for a linear homotopy $h_{t}$ of Hermitian metrics, the homotopy
formula in Proposition 3.15 in \cite{BC} contains the inverse metrics through $\Theta_{t}=\delb (h_{t}^{-1}\del h_{t})$, which does not allow to integrate over $t$ in a closed form.
As the result, it is difficult to get explicit formulas for the Bott-Chern forms in terms of Hermitian metrics $h_{1}$ and $h_{2}$ only.
\end{remark}
In \cite{GS}, Gillet and Soul\'{e} gave a construction of the Bott-Chern secondary classes which is also well-suited for short exact sequences of holomorphic vector bundles over $X$, which are used for defining the $K$-theory of $X$. Namely, let $E$ be a holomorphic vector bundle over $X$ with Hermitian metrics
$h_{1}$ and $h_{2}$, let $\mathcal{O}(1)$ be the standard holomorphic line bundle of degree $1$ over the complex projective line $\PP^{1}$, and let $\tilde{E}=E\otimes\mathcal{O}(1)$ be the corresponding vector bundle over $X\times\PP^{1}$.  If $i_{p}:X\rightarrow X\times \C P^{1}$ is the natural inclusion map $i_{p}(x)=(x,p)$ then $i_{p}^{\ast}(\tilde{E})\simeq E$ for all  $p \in \PP^{1}$. Let $\tilde{h}$ be a Hermitian metric on $\tilde{E}$ such that $i_{0}^{\ast}(\tilde{h})=h_{1}$ and $i_{\infty}^{\ast}(\tilde{h})=h_{2}$ (such a metric is constructed using a partition of unity).\\
\indent In this construction, the Bott-Chern secondary form for the Chern character is
\begin{equation} \label{bc-def}
\bc(E;h_{1},h_{2})=\int_{\PP^{1}}\ch(\tilde{E},\tilde{h})\log|z|^{2}
\end{equation}
--- The integral is convergent since $\log|z|^{2}\omega(z)$, where $\omega$ is  any smooth $(1,1)$-form on $\PP^{1}$,  is integrable.

\begin{lemma}[H. Gillet and C. Soul\'{e}] The Bott-Chern form $\bc(E;h_{1},h_{2})$ satisfies equations \eqref{bc-def-1} and \eqref{bc-funct}, and modulo $\im\,\del +\im\,\delb$ does not depend on the choice of Hermitian metric $\tilde{h}$.
\end{lemma}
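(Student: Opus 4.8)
The plan is to transcribe the proof of Lemma 2.1 into the $\del\delb$-setting, with $S^{1}$ replaced by $\PP^{1}$ and the step function $g(\theta)$ replaced by the integrable weight $\log|z|^{2}$; the three assertions then follow, respectively, from the Poincar\'e--Lelong formula, from holomorphic base change for fibre integration, and from a two-parameter argument like the one used in Lemma 2.1 for the independence of $\tilde{\nabla}$. For equation \eqref{bc-def-1}, write $\pi:X\times\PP^{1}\to X$ for the projection and recall that fibre integration $\pi_{\ast}$ commutes with $\del$ and $\delb$, the fibre $\PP^{1}$ being compact without boundary. First one checks that $\bc(E;h_{1},h_{2})=\pi_{\ast}(\log|z|^{2}\,\ch(\tilde{E},\tilde{h}))$ is a genuine smooth form, differentiation under the integral sign being legitimate since $\log|z|^{2}$ and its fibrewise $\del,\delb$ integrate against smooth forms. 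Because $\ch(\tilde{E},\tilde{h})$ is $\del$- and $\delb$-closed on $X\times\PP^{1}$, all mixed terms in the Leibniz expansion vanish and $\delb\del(\log|z|^{2}\,\ch(\tilde{E},\tilde{h}))=(\delb\del\log|z|^{2})\wedge\ch(\tilde{E},\tilde{h})$. A standard Poincar\'e--Lelong computation gives $\frac{\sqrt{-1}}{2\pi}\delb\del\log|z|^{2}=\delta_{\infty}-\delta_{0}$ as currents on $\PP^{1}$, where $\delta_{0},\delta_{\infty}$ are the integration currents at the points $0,\infty$. Pushing forward,
$$\frac{\sqrt{-1}}{2\pi}\,\delb\del\,\bc(E;h_{1},h_{2})=\pi_{\ast}\big((\delta_{\infty}-\delta_{0})\wedge\ch(\tilde{E},\tilde{h})\big)=i_{\infty}^{\ast}\ch(\tilde{E},\tilde{h})-i_{0}^{\ast}\ch(\tilde{E},\tilde{h})=\ch(E,h_{2})-\ch(E,h_{1}),$$
which is \eqref{bc-def-1}.

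For the functoriality \eqref{bc-funct}, given holomorphic $f:Y\to X$ put $F=f\times\mathrm{id}_{\PP^{1}}:Y\times\PP^{1}\to X\times\PP^{1}$. Then $F^{\ast}\tilde{E}\simeq\widetilde{f^{\ast}E}$ and $F^{\ast}\tilde{h}$ restricts to $f^{\ast}h_{1}$ over $0$ and to $f^{\ast}h_{2}$ over $\infty$, hence is an admissible metric in \eqref{bc-def} for $\bc(f^{\ast}E;f^{\ast}h_{1},f^{\ast}h_{2})$. Combining the naturality of Chern--Weil forms, $F^{\ast}\ch(\tilde{E},\tilde{h})=\ch(F^{\ast}\tilde{E},F^{\ast}\tilde{h})$, the identity $F^{\ast}(\log|z|^{2})=\log|z|^{2}$, and the base-change formula $\pi^{Y}_{\ast}\circ F^{\ast}=f^{\ast}\circ\pi^{X}_{\ast}$ (the square being Cartesian with $\PP^{1}$-fibres matched by $F$), one gets $\bc(f^{\ast}E;f^{\ast}h_{1},f^{\ast}h_{2})=f^{\ast}\bc(E;h_{1},h_{2})$ for the chosen representatives; since $f^{\ast}$ preserves $\im\,\del+\im\,\delb$, this descends to $\tilde{\A}(X,\C)$ once independence of $\tilde{h}$ is established.

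Independence of $\tilde{h}$ is the substantive point, and I would handle it exactly as the independence of $\tilde{\nabla}$ in Lemma 2.1, using two copies of $\PP^{1}$. Given metrics $\tilde{h},\tilde{h}'$ on $\tilde{E}$ over $X\times\PP^{1}$, both restricting to $h_{1}$ over $0$ and to $h_{2}$ over $\infty$, form the bundle $\widehat{E}$ over $X\times\PP^{1}_{z_{1}}\times\PP^{1}_{z_{2}}$ (the pullback of $\tilde{E}$ from $X\times\PP^{1}_{z_{1}}$) with a Hermitian metric $\hat{h}$ restricting to $\tilde{h}$ on $\{z_{2}=0\}$, to $\tilde{h}'$ on $\{z_{2}=\infty\}$, and to the $z_{2}$-independent pullbacks of $h_{1}$ and $h_{2}$ on $\{z_{1}=0\}$ and $\{z_{1}=\infty\}$; such an $\hat{h}$ exists by a partition of unity since the four corner values agree. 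Applying the first part over the base $X\times\PP^{1}_{z_{1}}$ to $(\widehat{E},\hat{h})$, the form
$$\gamma:=\int_{\PP^{1}_{z_{2}}}\ch(\widehat{E},\hat{h})\,\log|z_{2}|^{2}$$
on $X\times\PP^{1}_{z_{1}}$ satisfies $\frac{\sqrt{-1}}{2\pi}\delb\del\gamma=\ch(\tilde{E},\tilde{h}')-\ch(\tilde{E},\tilde{h})$ there, whence, writing $\pi$ now for $X\times\PP^{1}_{z_{1}}\to X$,
$$\bc(\tilde{h}')-\bc(\tilde{h})=\pi_{\ast}\Big(\frac{\sqrt{-1}}{2\pi}(\delb\del\gamma)\,\log|z_{1}|^{2}\Big).$$
I would then integrate by parts in $z_{1}$: replacing $\log|z_{1}|^{2}$ by $\log(|z_{1}|^{2}+\epsilon)$, applying the ordinary Leibniz rule for $\delb\del$ of a product, pushing down by $\pi_{\ast}$, and letting $\epsilon\to0$ (all the forms involved converge in $L^{1}$ on the fibre, and $\frac{\sqrt{-1}}{2\pi}\delb\del\log(|z_{1}|^{2}+\epsilon)\to\delta_{\infty}-\delta_{0}$ weakly), one obtains
$$\bc(\tilde{h}')-\bc(\tilde{h})=\frac{\sqrt{-1}}{2\pi}\delb\del\,\pi_{\ast}(\gamma\log|z_{1}|^{2})+\del(\cdots)+\delb(\cdots)+\big(i_{\infty}^{\ast}\gamma-i_{0}^{\ast}\gamma\big),$$
where $\del(\cdots)$ and $\delb(\cdots)$ are $\del$ and $\delb$ of the pushforwards of the fibrewise-integrable forms $\gamma\wedge(d\bar z_{1}/\bar z_{1})$ and $\gamma\wedge(dz_{1}/z_{1})$ (up to constants). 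The first three terms lie in $\im\,\delb\del+\im\,\del+\im\,\delb\subseteq\im\,\del+\im\,\delb$, and the last vanishes: over $\{z_{1}=0\}$ and $\{z_{1}=\infty\}$ the metric $\hat{h}$ is pulled back from $X$, so $\ch(\widehat{E},\hat{h})$ restricted there has no component along $\PP^{1}_{z_{2}}$ and its fibre integral against $\log|z_{2}|^{2}$ is zero --- exactly the vanishing used for the analogous restrictions in Lemma 2.1. Therefore $\bc(\tilde{h}')-\bc(\tilde{h})\in\im\,\del+\im\,\delb$.

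The only genuinely delicate step is this last one: integrating by parts against the non-smooth weight $\log|z_{1}|^{2}$ and verifying that the resulting pushforwards are again smooth forms on $X$. This is controlled by the same elementary integrability facts --- $\log|z_{1}|^{2}$, $dz_{1}/z_{1}$ and $dz_{1}\wedge d\bar z_{1}/z_{1}$ integrate against smooth forms on $\PP^{1}$ and one may differentiate under the integral sign --- that underlie the definition \eqref{bc-def} itself, and the $\epsilon$-regularization above is the cleanest way to make the bookkeeping rigorous. The other two assertions are purely formal, resting on Poincar\'e--Lelong, the $\del\delb$-closedness of $\ch(\tilde{E},\tilde{h})$, and the compatibility of $\pi_{\ast}$ with $\del$, $\delb$ and with holomorphic base change.
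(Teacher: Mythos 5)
Your proposal is correct and follows exactly the route the paper indicates: the paper itself does not write out a proof of this lemma but refers to \cite{GS,BGS}, records the Poincar\'e--Lelong identity $\frac{\sqrt{-1}}{2\pi}\delb\del\log|z|^{2}=\delta_{\infty}-\delta_{0}$ as the key ingredient, and notes that its proof of Lemma 2.1 is a simplified version of this very argument. Your fleshing-out --- Poincar\'e--Lelong plus $\del\delb$-closedness of $\ch(\tilde{E},\tilde{h})$ for \eqref{bc-def-1}, base change for \eqref{bc-funct}, and the $\PP^{1}\times\PP^{1}$ two-parameter construction with the vanishing of $i_{0}^{\ast}\gamma$ and $i_{\infty}^{\ast}\gamma$ mirroring the $\theta_{1}=0,\pi$ step of Lemma 2.1 --- is the intended argument and is sound.
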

The proof given in \cite[Section f)]{BGS} uses the current equation
$$ \frac{\sqrt{-1}}{2\pi}\,\delb\del\log|z|^{2} = \delta_{\infty}- \delta_{0},$$
and the proof of Lemma \ref{C-S-1} uses a simplified version of this argument.
As in the previous section, we put
$$\BC(E;h_{1},h_{2})=\bc(E;h_{1},h_{2})\!\!\!\!\!\mod\!\!(\im\,\del +\im\,\delb).$$
\begin{remark} Note that formula \eqref{bc-def} the for Bott-Chern forms uses the Green function $\log|z|^{2}$ of the Laplace operator on $\PP^{1}$, whereas formula \eqref{cs-def} for the Chern-Simons form uses the Green function $g(\theta)$ of the operator $\dfrac{d}{d\theta}$ on $S^{1}$.
\end{remark}

\subsection{Chern forms of trivial bundles}\label{Sub:3.3}
In what follows if $a$ is a nilpotent element of order $r$ of a commutative ring, then by definition $-\log(1-a)=a+\frac{a^2}{2}+\ldots +\frac{a^{r-1}}{r-1}$ and $\frac{1}{1-a}= 1+a+a^2+\ldots+ a^{r-1}$. We start with the following simple linear algebra result.
\begin{lemma} \label{Algebra} Let $A$ be a matrix over $\C$ or over a commutative algebra $\mathcal{A}$ over $\mathbb{C}$, where in the latter case all its matrix elements are nilpotent. Suppose that $A^2=aA$ for some
$a\in\mathcal{A}$, and that $1-\lambda a$ is invertible for all $\lambda$ in some domain $D\subset\C$ containing $0$. Then for such $\lambda$ we have
$$ (I-\lambda A)^{-1} = I+\frac{\lambda}{1-\lambda a} A,$$ and
$$\det(I-\lambda A) = \exp\left\{\frac{\Tr A}{a}\log(1-\lambda a)\right\}.$$ 
In particular, if $\alpha_{i}, \beta_{i}$, $i=1,\dots,k$, are odd elements in some  graded-commutative algebra over $\C$ (e.g., the algebra of complex differential forms on $X$), and $A_{ij}=\alpha_{i}\beta_{j}$, then  $A^2=aA$ where $a=-\Tr A=-\sum_{i=1}^{k}\alpha_{i}\beta_{i}$, and
$$\det(I-\lambda A)=\frac{1}{1-\lambda a}.$$
\end{lemma}
\begin{proof}  For $\lambda\in D$ we have
\begin{equation*} 
(I-\lambda A)^{-1}=I +\frac{\lambda}{1-\lambda a}\,A.
\end{equation*}
To prove the formula for the determinant, we use the identity
$$\frac{d}{d\lambda}\log\det(I-\lambda A)=-\Tr \left\{A(I-\lambda A)^{-1}\right\},\quad\lambda\in D.$$
It is well-known for matrices over $\C$ (and easily proven using the Jordan canonical form), and for matrices with nilpotent entries it easily follows the definition of the determinant. Using formula for the inverse, we obtain
$$\frac{d}{d\lambda}\log\det(I-\lambda A)=-\frac{\Tr A}{1-\lambda a}=\frac{d}{d\lambda}\frac{\Tr A}{a}\log (1-\lambda a),$$
and integrating from $0$ to $\lambda$ using $\det I=1$ gives the result.
\end{proof}
\begin{remark} For matrices over $\C$ equation $A^2=aA$ implies that all eigenvalues of $A$ are either $0$ or $a$, so in this case
$$\det(I-\lambda A)=(1-\lambda a)^m,\quad m\geq 0.$$
\end{remark}
The next result is an explicit computation of the total Chern
form of a trivial vector bundle with a special non-diagonal Hermitian metric.
\begin{lemma} \label{Main} Let $E_{r}=X\times\C^{r}$ be a trivial rank $r$ vector bundle over $X$ with a Hermitian metric $h$ given by
\begin{equation*}
h=h(\s,f_{1},\dots,f_{r-1})=g^{\ast}g, \quad\text{where}\quad g=\begin{pmatrix} 1 & 0 & 0 & \dots &0 & \bar{f}_{1} \\
0 & 1 & 0 & \dots & 0 & \bar{f}_{2} \\
0 & 0 & 1 & \dots & 0 & \bar{f}_{3}\\
\vdots & \vdots & \vdots &\ddots &\vdots &\vdots \\
0 & 0 & 0 & \dots & 1& \bar{f}_{r-1}\\
0 & 0 & 0 & \dots & 0 & e^{\s/2} 
\end{pmatrix},
\end{equation*}
and $f_{1},\dots, f_{r-1}\in C^{\infty}(X,\C)$, $\s\in C^{\infty}(X,\R)$. Then
$$c(E_{r},h)= 
c(E_{1},e^{\s})+ \frac{\sqrt{-1}}{2\pi}\,\delb\del\log\left(1 - \frac{\sqrt{-1}}{2\pi}\,U\right),$$
where $U=e^{-\s}\sum_{i=1}^{r-1} \partial f_i\wedge \delb \bar{f}_{i}$ and $E_{1}=\det E_{r}$ is a trivial line bundle over $X$. Equivalently,
$$c_{1}(E_{r},h)=\frac{\sqrt{-1}}{2\pi}\,\delb\del\s,\;\; c_{k}(E_{r},h)=-\frac{1}{k-1}\left(\frac{\sqrt{-1}}{2\pi}\right)^{k}\!\!\delb\del\,U^{k-1}, \;\; k=2 ,\dots, r.$$
\end{lemma}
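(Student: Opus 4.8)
The plan is to compute directly the curvature $\Theta$ of the canonical connection $\nabla = d + h^{-1}\del h$ for the metric $h = g^{\ast}g$, and then feed it into the determinant formula for $c(E_r,h)$. First I would note that $h = g^{\ast}g$, so $h^{-1}\del h = g^{-1}(g^{-1})^{\ast}\del(g^{\ast}g)$; since $g$ is upper-triangular with holomorphic-looking structure except for the $\bar f_i$ and $e^{\s/2}$ entries, and $\del g^{\ast}$ involves $\del\bar f_i$ and $\del e^{\s/2}$, a more efficient route is to write the connection form as $A^{1,0} = h^{-1}\del h$ and the curvature as $\Theta = \delb A^{1,0} = \delb(h^{-1}\del h)$. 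I would compute $h$, $h^{-1}$, $\del h$ explicitly as $r\times r$ matrices; because $g$ differs from the identity only in the last column, $h = I + (\text{rank-one-ish correction})$ and its inverse can be written in closed form using the Sherman–Morrison-type identity (or directly, since the correction is concentrated in the last row and column). This gives $A^{1,0}$ with entries built from $\del\bar f_i$, $f_i$, $\s$, and then $\Theta = \delb A^{1,0}$.

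The key structural observation, the one attributed to Movshev, is that after this computation the relevant off-diagonal entries of $\Theta$ have the form $\alpha_i \beta_j$ for odd (degree-one) forms $\alpha_i, \beta_j$ — that is, $\Theta$ (or its essential part) fits the hypothesis of Lemma \ref{Algebra}. Concretely I expect the last column / last row of $\Theta$ to carry $(1,1)$-forms of the shape $e^{-\s}\del f_i\wedge\delb\bar f_j$ plus a scalar $\delb\del\s$ piece on the $(r,r)$ entry, and the key point is that the $(r-1)\times(r-1)$ block decouples or contributes trivially, so that $\det(I + \tfrac{\sqrt{-1}}{2\pi}\Theta)$ reduces to evaluating $\det(I - \lambda A)$ for $A$ of the type in Lemma \ref{Algebra} with $a = U$ up to the scalar factor $c(E_1,e^{\s}) = 1 + \tfrac{\sqrt{-1}}{2\pi}\delb\del\s$. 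Then Lemma \ref{Algebra} gives $\det(I-\lambda A) = \tfrac{1}{1+\lambda a}$, i.e. $\det = (1 - \tfrac{\sqrt{-1}}{2\pi}U)^{-1}$; taking the logarithm and using that $\tfrac{\sqrt{-1}}{2\pi}\delb\del$ applied to $\log\det$ recovers the statement, since $\log$ of the full total Chern form splits as the sum $\log c(E_1,e^{\s}) + \log(1 - \tfrac{\sqrt{-1}}{2\pi}U)^{-1}$, and $-\log(1-x) = x + x^2/2 + \cdots$ truncates at order $r-1$ by nilpotency. Matching graded pieces then yields the formulas for $c_1$ and $c_k$.

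The main obstacle, and the step I would budget the most care for, is the explicit curvature computation: verifying that $\Theta = \delb(h^{-1}\del h)$ really does organize into the clean form $\Theta = \Theta_{\text{scalar}}\cdot I_{\text{last}} + A$ with $A_{ij} = \alpha_i\beta_j$, with no leftover cross terms obstructing the application of Lemma \ref{Algebra}. In particular one must check that terms like $\delb\bar f_i \wedge \del(e^{-\s})\wedge f_j$-type contributions, and the mixed terms coming from $\delb(h^{-1})\wedge\del h$ versus $h^{-1}\delb\del h$, combine so that the trace $a$ of the relevant block is exactly $U = e^{-\s}\sum \del f_i\wedge\delb\bar f_i$ (note the sign and the $\del\,/\,\delb$ placement: $\del f_i \wedge \delb\bar f_i = -\delb\bar f_i\wedge\del f_i$, so bookkeeping of signs is delicate), and that the $(r,r)$-entry of $\Theta$ contributes the $\delb\del\s$ piece accounting for $c(E_1,e^\s) = \det h$-related factor. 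I would double-check the rank-$2$ and rank-$3$ cases by hand (or cite the Mathematica verification mentioned in the acknowledgments) before asserting the general pattern, and I would phrase the reduction so that $\det(I + \tfrac{\sqrt{-1}}{2\pi}\Theta) = c(E_1,e^\s)\det(I - \tfrac{\sqrt{-1}}{2\pi}A)$, making the role of Lemma \ref{Algebra} and of the identity $\det E_r = E_1$ transparent. The remaining steps — expanding the logarithm, applying $\delb\del$, and reading off $c_k$ — are routine.
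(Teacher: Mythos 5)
Your high-level plan---compute $\Theta=\delb(h^{-1}\del h)$, exploit the block structure, and invoke Lemma \ref{Algebra}---is indeed the paper's strategy in outline, but the structural claim on which your reduction rests is false, and the conclusion you would draw from it contradicts the lemma itself. You predict that the blocks ``decouple'' so that $\det\bigl(I+\tfrac{\sqrt{-1}}{2\pi}\Theta\bigr)$ factors as $c(E_{1},e^{\s})\,\det(I-\lambda A)=\bigl(1+\tfrac{\sqrt{-1}}{2\pi}\delb\del\s\bigr)\bigl(1-\tfrac{\sqrt{-1}}{2\pi}U\bigr)^{-1}$. That multiplicative answer cannot be correct: its degree-two part is $\tfrac{\sqrt{-1}}{2\pi}(\delb\del\s+U)$, whereas $\det h=|\det g|^{2}=e^{\s}$ forces $c_{1}(E_{r},h)=\tfrac{\sqrt{-1}}{2\pi}\delb\del\s$, exactly as stated in Lemma \ref{Main}; the true total Chern form is the \emph{additive} expression $c(E_{1},e^{\s})+\tfrac{\sqrt{-1}}{2\pi}\delb\del\log\bigl(1-\tfrac{\sqrt{-1}}{2\pi}U\bigr)$, which is not of your product form. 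Your closing step is also not a legitimate operation: the total Chern form is the determinant $\det\bigl(I+\tfrac{\sqrt{-1}}{2\pi}\Theta\bigr)$ itself, and there is no identity that recovers it by applying $\delb\del$ to $\log\det$; the only statement of that kind is $c_{1}=\tfrac{\sqrt{-1}}{2\pi}\delb\del\log\det h$.

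Concretely, the verification you flag as the main obstacle would fail as you set it up. The raw block $\Theta_{11}=\{-\delb(\bar f_{i}e^{-\s}\del f_{j})\}$ is \emph{not} of the $\alpha_{i}\beta_{j}$ form (it contains the terms $-\bar f_{i}\,\delb(e^{-\s}\del f_{j})$), and the product-type entries sit in the $(r-1)\times(r-1)$ block, not in the last row and column as you expect. The paper first performs the determinant-preserving row operations $R_{i}\mapsto R_{i}+\bar f_{i}R_{r}$, after which the upper-left block becomes $I-\lambda A$ with $A_{ij}=e^{-\s}\delb\bar f_{i}\wedge\del f_{j}$; it then uses the full Schur-complement identity $\det(I+\lambda\Theta)=\det(I-\lambda A)\bigl(d-c(I-\lambda A)^{-1}b\bigr)$, where Lemma \ref{Algebra} supplies both $\det(I-\lambda A)=(1-\lambda U)^{-1}$ and the inverse $(I-\lambda A)^{-1}$, and where the cross term $c(I-\lambda A)^{-1}b$ contributes at every order---it is precisely what destroys the factorization you assume. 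A further nontrivial simplification, using the identities for $\delb F$ (with $F=e^{-\s}\sum_{l}\bar f_{l}\del f_{l}$) and for $\del U$, $\delb U$, $\delb\del U$, produces the cancellations leading to $\det(I+\lambda\Theta)=1+\lambda\delb\del\s-\lambda^{2}\tfrac{\delb\del U}{1-\lambda U}-\lambda^{3}\tfrac{\delb U\wedge\del U}{(1-\lambda U)^{2}}=1+\lambda\delb\del\s+\lambda\delb\del\log(1-\lambda U)$. Your proposal stops exactly where this work begins, so as written it has a genuine gap rather than merely unfinished bookkeeping.
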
 
\begin{proof} We compute the total Chern form $c(E_{r},h)$ by a direct calculation which uses Lemma \ref{Algebra}. Let $\Theta=\delb(h^{-1}\del h)$ be the curvature form associated with the Hermitian metric $h$. We need to prove that for every $\lambda\in\C$,
\begin{align*}
\det(I+\lambda\Theta) &=1+\lambda \bar{\partial}\partial \sigma +  \lambda\bar{\partial}\partial \log ( 1 - \lambda U ) \\
& = 1+\lambda\bar{\partial}\partial \sigma - \lambda^2\frac{\delb \del U}{1 - \lambda U} - \lambda^3\frac{\delb U\wedge \del U} {(1- \lambda U)^2},
\end{align*}
where
$$\frac{1}{1-\lambda U}=\sum_{k=0}^{r-1}\lambda^{k}U^{k}\quad\text{and}\quad \frac{1}{(1-\lambda U)^{2}} =\sum_{k=0}^{r-1}(k+1)\lambda^{k}U^{k}.$$
It is convenient to represent the matrix $I + \lambda\Theta$ in the following block form
$$I + \lambda\Theta =\begin{pmatrix} I + \lambda\Theta_{11}& \lambda\Theta_{12} \\
\lambda\Theta_{21} & 1+\lambda\Theta_{22} 
\end{pmatrix},$$
where $(r-1)\times (r-1)$ matrix $\Theta_{11}$, $(r-1)$-vectors $\Theta_{12}, \Theta_{21}^{t}$, and the scalar $\Theta_{22}$ are given by
\begin{gather*}
\Theta_{11}=\left\{-\bar{\partial}(\bar{f}_{i}e^{-\sigma} \partial f_j)\right\}_{i,j=1}^{r-1},\quad \Theta_{12}=\left\{ \bar{\partial}\partial\bar{f}_{i} - \bar{\partial}(\bar{f}_{i}F) -\bar{\partial}(\bar{f}_{i}\partial \sigma)\right\}_{i=1}^{r-1},\\
\Theta_{21}^{t}=\left\{ \bar{\partial}(e^{-\sigma} \partial f_{i}) \right\}_{i=1}^{r-1},\quad \Theta_{22} =\delb\del\sigma +\delb F,
\end{gather*}
and $F=e^{-\sigma} \sum_{l=1}^{r-1}\bar{f}_{l} \partial f_{l}$.
The row operations $R_{i}\mapsto R_{i}+\bar{f}_{i}R_{r}$ transform the matrix 
$I + \lambda\Theta$ to the form
$$\begin{pmatrix} I-\lambda A & b\\
c & d
\end{pmatrix}, $$
where
\begin{gather*}
A =\left\{e^{-\sigma}\delb\bar{f}_{i}\wedge \del f_{j}\right\}_{i,j=1}^{r-1},\quad 
b = \left\{\bar{f}_{i} + \lambda(\bar{\partial}\partial\bar{f}_{i} - \bar{\partial}\bar{f}_{i}\wedge F -\bar{\partial}\bar{f}_{i}\wedge\partial \sigma)\right\}_{i=1}^{r-1},
\end{gather*} 
and we put $c=\lambda\Theta_{21}$, $d=1+\lambda\Theta_{22}$. 

Now it follows from the representation 
$$\begin{pmatrix} I-\lambda A & b\\
c & d
\end{pmatrix} =\begin{pmatrix} I & b\\
c(I-\lambda A)^{-1} & d
\end{pmatrix}\begin{pmatrix} I-\lambda A & 0\\
0 & 1
\end{pmatrix}$$ 
that
$$\det(I+\lambda\Theta)=\det(I-\lambda A)\left(d-c(I-\lambda A)^{-1}b\right),$$
which we compute explicitly using Lemma \ref{Algebra}. Namely,
\begin{gather*}
\det(I+\lambda\Theta)=\frac{1}{1-\lambda U}\Bigg(1+\lambda(\delb\del\sigma +\delb F) \\
-\sum_{i,j=1}^{r-1}\lambda\delb(e^{-\sigma}\del f_{i})\wedge
\Big(\delta_{ij}+\frac{\lambda e^{-\sigma}\delb\bar{f}_{i}\wedge\del f_{j}}{1-\lambda U}\Big)\wedge(\bar{f}_{j} + \lambda(\bar{\partial}\partial\bar{f}_{j} - \bar{\partial}\bar{f}_{j}\wedge(F+\del\sigma))\Bigg).
\end{gather*}
Using equations 
$$\delb F=-U-\delb\sigma\wedge F +e^{-\sigma}\sum_{i=1}^{r-1}\bar{f}_{i}\delb\del f_{i},$$ 
and
\begin{equation*} 
\del U =-\del\sigma\wedge U + \Psi_{+},\quad
\delb U = -\delb\sigma\wedge U + \Psi_{-},
\end{equation*}
\begin{gather*}
\delb\del U= -\delb\del\sigma\wedge U + \delb\sigma\wedge\del\sigma\wedge U +\del\sigma\wedge\Psi_{-} - \delb\sigma\wedge\Psi_{+}
+ \Phi,
\end{gather*}
where
$$\Psi_{+}=e^{-\sigma}\sum_{i=1}^{r-1}\del f_{i}\wedge\delb\del\bar{f}_{i},\;\; 
\Psi_{-}=e^{-\sigma}\sum_{i=1}^{r-1}\delb\del f_{i}\wedge\delb\bar{f}_{i},\;\;
\Phi=e^{-\sigma}\sum_{i=1}^{r-1}\delb\del f_{i}\wedge\delb\del\bar{f}_{i},$$
and simplifying, we obtain
\begin{align*}
\det(I+\lambda\Theta) & =1+\frac{\lambda}{1-\lambda U}\Bigg(\delb\del\sigma -\lambda\Phi +\lambda\delb\sigma\wedge\Psi_{+} - \lambda\delb\sigma\wedge U\wedge F + \lambda\Psi_{-}\wedge F \\ 
& - \lambda\delb\sigma\wedge\del\sigma\wedge U +
\lambda\Psi_{-}\wedge\del\sigma
-\frac{\lambda}{1-\lambda U}\Big(-\delb\sigma\wedge U\wedge F + \Psi_{-}\wedge F \\
& + \lambda\delb\sigma\wedge\del\sigma\wedge U\wedge U 
+ \lambda\Psi_{-}\wedge\Psi_{+} -\lambda\Psi_{-}\wedge U\wedge F -
\lambda\Psi_{-}\wedge U\wedge\del\sigma
\\ & -\lambda\delb\sigma\wedge U\wedge\Psi_{+} + \lambda\delb\sigma\wedge U\wedge U\wedge F\Big)\Bigg) \\
&  =1+\frac{\lambda}{1-\lambda U}\Big(\delb\del\sigma + \lambda(-\Phi +\delb\sigma\wedge\Psi_{+} 
  - \delb\sigma\wedge\del\sigma\wedge U  +
\Psi_{-}\wedge\del\sigma)\Big)   \\ & -\lambda^{3}\frac{\delb U\wedge \del U}{(1-\lambda U)^{2}}  =1+\lambda\delb\del\sigma -\lambda^{2}\frac{\delb\del U}{1-\lambda U} -\lambda^{3}\frac{\delb U\wedge \del U}{(1-\lambda U)^{2}}. 
\end{align*}
\end{proof}
\begin{remark} Following the suggestion of the referee, here we give a more invariant proof of Lemma \ref{Main}. Namely, the  bundle $(E_r, h)$ is a metric extension of the bundle $(E_{r-1},I)$ with the flat metric $I$ by the line bundle $(E_1,e^{\sigma})$ with the metric $e^\sigma$. Dualizing, we obtain the following short exact sequence  
\begin{equation*}
 \begin{CD} 0 @>>> E_{1}^{*} @>i>> E^{*}_r  @>p>> E^{*}_{r-1}  @>>> 0 \end{CD}
\end{equation*}
of Hermitian vector bundles. Applying Proposition \ref{B-C-formula-1} in  Section \ref{Sub:4.2} to this sequence and putting $t=-1$ we immediately get Lemma \ref{Main}. Indeed, in this case $\Theta_{H}=0$ and an easy computation of the second fundamental form of $E_{1}^{*}$ in $E_{r}^{*}$ gives the result. 
\end{remark}
\begin{corollary} \label{Id} The following identities hold for $k=0,\dots,r$,
\begin{gather*} \label{zero}
\displaystyle \sum _{l=0} ^{r} (-1)^l \ch_k (\Lambda ^l E_r^{*}) = -\frac{\delta _{kr}}{r-1} \left ( \frac{\sqrt{-1}}{2\pi} \right ) ^r \bar{\partial}{\partial}\left(U^{r-1}\right) \\
=-(r-2)!\delta _{kr} \left ( \frac{\sqrt{-1}}{2\pi} \right ) ^r \bar{\partial}{\partial}\left(e^{(r-1)\s}\del f_{1}\wedge\delb\bar{f}_{1}\wedge\cdots\wedge\del f_{r-1}\wedge\delb\bar{f}_{r-1}\right).
\end{gather*}
\end{corollary}
\begin{proof} Immediately follows from Lemmas \ref{Hirzerbruch} and  \ref{Main}.
\end{proof}

\begin{remark} For the rank $2$ trivial vector bundle $E_{2}$ with the Hermitian metric
\begin{equation*}
h=h(\sigma,f)=\begin{pmatrix} 1 & \bar{f} \\
f & |f|^{2} +e^{\sigma} \\
\end{pmatrix} = \begin{pmatrix} 1 & 0 \\
f & e^{\sigma/2} \\
\end{pmatrix}\begin{pmatrix} 1 & \bar{f} \\
0 & e^{\sigma/2} \\
\end{pmatrix}.
\end{equation*}
the main identity in the Corollary \ref{Id} takes the form 
\begin{equation*}
\ch_{2}(E_{2},h(\sigma,f))-\ch_{2}(E_{1},e^{\sigma})=-\frac{1}{(2\pi)^{2}}\delb\del(e^{-\s}\del f\wedge \delb\f),
\end{equation*}
and can be verified by a straightforward computation.
For the bundles of ranks $3$ and $4$ corresponding identities were first verified using special Mathematica package for computing Chern character forms, written by Michael Movshev. Here we prove these identities.
\end{remark}
\begin{remark} The Bott-Chern forms have been already used by physicists in their study of supersymmetric quantum field theories. Thus 
setting $\bar{\theta}=h^{-1}\delb h$, it is easy to obtain
$$\Tr(\theta\wedge\bar{\theta})=e^{-\s}\del f\wedge \delb\bar{f} +e^{-\s}\del\bar{f}\wedge\delb f.$$
To get rid of the second term and to write down the simplest nontrivial Bott-Chern form $\bc_{1}(h,I)$, where $I$ is a trivial Hermitian metric on $E_{2}$, we need to add the ``Wess-Zumino term'' (rather its $(1,1)$-component) to the ``kinetic term'' $\Tr(\theta\wedge\bar{\theta})$. Such formula was first obtained by A. Alekseev and S. Shatashvili in \cite{AS}, where for the case of the Minkowski signature the decomposition
\begin{equation*}
\begin{pmatrix} 1 & \bar{f} \\
f & |f|^{2} +e^{\sigma} \\
\end{pmatrix} = \begin{pmatrix} 1 & 0 \\
f & 1 \end{pmatrix} \begin{pmatrix} 1 & 0 \\
0 & e^{\sigma} 
\end{pmatrix} \begin{pmatrix} 1 & \bar{f} \\
0 & 1 \\
\end{pmatrix}
\end{equation*}
is replaced by the Gauss decomposition for $\mathrm{SL}(2,\C)$. 

The Bott-Chern forms (or rather their exponents) also appear quite naturally in supersymmetric quantum field theories as ratios of non-chiral partition functions for the higher dimensional versions of the so-called $bc$-systems \cite{LMNS}.
\end{remark}

\subsection{The main result}\label{Sub:3.4}
The first result is an analogue of Lemma \ref{Connes} for general complex manifolds.
\begin{lemma} \label{Connes-hol} Let $X$ be a complex manifold. Every $\omega\in\A^{k,k}(X,\C)\cap\A^{2k}(X,\R)$ can be written as a finite linear combination over $\R$ of wedge products of real 
$(1,1)$-forms of the type $\sqrt{-1}\,e^{\sigma}\del f\wedge\delb\bar{f}$, where $\sigma\in C^{\infty}(X,\R)$ and $f\in C^{\infty}(X,\C)$. Moreover, if $\omega$ is zero on open $U\subset X$,
than one can choose these forms such that all functions $\s$ and $f$ vanish on $U$. 
\end{lemma}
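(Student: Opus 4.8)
The plan is to reduce the statement to the real-coefficient analogue of Lemma~\ref{Connes} by working locally, expressing a local real $(k,k)$-form in terms of local real-valued functions, and then recognizing the two types of building blocks. First I would use a finite coordinate cover $\{U_\alpha\}$ with holomorphic coordinates $z^1,\dots,z^n$ and a partition of unity $\{\rho_\alpha\}$ subordinate to it, writing $\omega = \sum_\alpha \rho_\alpha\,\omega|_{U_\alpha}$ as in the proof of Lemma~\ref{Connes}. On each chart, a real $(k,k)$-form is a sum of terms $c\,\sqrt{-1}\,dz^{i_1}\wedge d\bar z^{j_1}\wedge\cdots$; after symmetrizing and using the reality constraint, it suffices to produce each elementary real $(1,1)$-form $\sqrt{-1}\,\del u\wedge\delb\bar v$ (and products thereof) from the allowed pieces, cut off by the bump functions $b_\alpha$ so that all functions extend globally and vanish where $\omega$ does, exactly as in Lemma~\ref{Connes}.

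The key local identity I would exploit is a polarization: for smooth functions, $\sqrt{-1}\,\del f\wedge\delb\bar f$ is already of the allowed type with $\sigma=0$, and more generally a product of real $(1,1)$-forms $\sqrt{-1}\,\del f_s\wedge\delb\bar f_s$, $s=1,\dots,k$, is real of type $(k,k)$. So I would first show that $\sqrt{-1}\,\del z^{i}\wedge\delb\bar z^{j}$ for $i\neq j$ can be written, via the standard parallelogram/polarization identity over $\R$ and over the "imaginary" direction, as a real linear combination of forms $\sqrt{-1}\,\del f\wedge\delb\bar f$ with $f$ a real-linear combination of $z^i,z^j$ (allowing the coefficient $\sqrt{-1}$ inside $f$); the diagonal terms $\sqrt{-1}\,\del z^i\wedge\delb\bar z^i$ are already of this form. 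Multiplying by a real coefficient function $c(z,\bar z)$, I would absorb it either into one factor via $\sqrt{-1}\,\del(\sqrt{c}\,f)\wedge\delb\overline{(\sqrt{c}\,f)}$ up to lower-order correction terms of the form $\sqrt{-1}\,\delb\del\rho$ (here is where the $\delb\del\rho$ type enters: the failure of $c$ to pull cleanly through $\del$ and $\delb$ produces $\del c\wedge\delb(\cdots)$ terms, and a standard manipulation $\del c\wedge\delb\bar g = \del\delb(\cdots) - \cdots$ converts the leftover into $\delb\del$ of something real plus strictly-lower-type garbage), and then iterate. The factor $e^\sigma$ in the statement gives extra flexibility: writing $c = e^\sigma$ when $c>0$, or splitting a general real $c$ as a difference of two positive functions, lets me realize $\sqrt{-1}\,c\,\del f\wedge\delb\bar f$ directly as $\pm\sqrt{-1}\,e^{\sigma_\pm}\del f\wedge\delb\bar f$.

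I would organize the argument as an induction on $k$. For $k=1$ it is the claim that every real $(1,1)$-form is a finite sum of $\sqrt{-1}\,\delb\del\rho$ and $\sqrt{-1}\,e^\sigma\del f\wedge\delb\bar f$ terms, which follows from the local polarization discussion above together with the bump-function globalization. For the inductive step, given a real $(k,k)$-form $\omega$, localize and expand $\omega|_{U_\alpha}$ as a real combination of $\left(\sqrt{-1}\,\del f_1\wedge\delb\bar f_1\right)\wedge\cdots\wedge\left(\sqrt{-1}\,\del f_k\wedge\delb\bar f_k\right)$ plus error terms in which at least one factor is $\sqrt{-1}\,\delb\del\rho$; the pure-wedge terms are already of the desired form after absorbing coefficients as above, and each error term, having a $\sqrt{-1}\,\delb\del\rho$ factor, times a real $(k-1,k-1)$-form, is handled by applying the inductive hypothesis to that lower-degree factor. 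The global patching and the vanishing-on-$U$ refinement are identical to the proof of Lemma~\ref{Connes}: multiply every generating function by the appropriate bump function $b_\alpha$ that is $1$ on $\mathrm{supp}\,\rho_\alpha$.

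The main obstacle I anticipate is the coefficient-absorption step: pulling a non-constant real function $c$ inside the $\del$ and $\delb$ operators is not exact, and controlling the resulting commutator terms — showing they are again of the two allowed types (possibly after introducing new $\rho$'s) and that the induction on the bidegree $k$ actually terminates — is the delicate bookkeeping. The escape hatch is precisely the $e^\sigma$ weight and the freedom to use arbitrarily many summands: rather than trying to be economical, one allows the recursion to spawn many terms of each type and only needs the process to strictly decrease some complexity measure (degree, or number of "bad" factors) at each stage, which it does because each commutator correction either lands in pure $\sqrt{-1}\,\delb\del\rho$ form or strictly lowers the $(k,k)$-degree.
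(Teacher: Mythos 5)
Your proposal is correct in its essentials and shares the paper's skeleton: a finite coordinate cover with a partition of unity, bump functions $b_{\alpha}$ to globalize the local coordinate functions, reduction to constant-coefficient pieces on each chart, and absorption of the remaining real coefficient $g$ by writing it as $e^{\sigma_{1}}-e^{\sigma_{2}}$. Where you genuinely diverge is at the key local step: the paper rewrites the $(k,k)$-component in the real coordinates $x^{i},y^{j}$ and identifies the off-diagonal Hermitian-symmetric combinations as $\sqrt{-1}\,\delb\del$ of explicit quadratics such as $\im(z^{i}\bar{z}^{j})$ and $\re(z^{i}\bar{z}^{j})$, reserving the pure type $\sqrt{-1}\,\del f\wedge\delb\bar{f}$ for the diagonal terms $f=b_{\alpha}z^{i}$, whereas you polarize Hermitian matrices so that every constant real $(1,1)$-form becomes a real combination of $\sqrt{-1}\,\del \ell\wedge\delb\bar{\ell}$ with $\ell$ a complex-linear function of the coordinates, and you use the $\delb\del\rho$ type only for correction terms. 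Both decompositions work; yours has the modest advantage that every resulting monomial automatically contains a factor of pure type into which the weight $e^{\sigma}$ can be inserted, so the coefficient absorption is uniform across all terms.

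Two cautions. First, your polarization claim as literally stated is false: for $i\neq j$ the single form $\sqrt{-1}\,\del z^{i}\wedge\delb\bar{z}^{j}$ is not real, hence cannot be a real linear combination of the real forms $\sqrt{-1}\,\del f\wedge\delb\bar{f}$; only the Hermitian-symmetric combinations such as $\sqrt{-1}\,(dz^{i}\wedge d\bar{z}^{j}+dz^{j}\wedge d\bar{z}^{i})=\sqrt{-1}\,\del(z^{i}+z^{j})\wedge\delb(\bar{z}^{i}+\bar{z}^{j})-\sqrt{-1}\,dz^{i}\wedge d\bar{z}^{i}-\sqrt{-1}\,dz^{j}\wedge d\bar{z}^{j}$ (and its analogue with $z^{i}+\sqrt{-1}\,z^{j}$) are. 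This is harmless, because the reality of $\omega$ guarantees that only such symmetric combinations occur, but it must be stated precisely. Second, the ``$\sqrt{c}$-absorption with commutator corrections'' and the attendant termination argument is the weak link you yourself flag: the cross terms of the form $\sqrt{c}\,\bar{f}\,\del f\wedge\delb\sqrt{c}$ are of neither allowed type without a further round of polarization, and they reappear carrying function coefficients on $\delb\del\rho$-type factors, which the allowed building blocks do not permit. Fortunately it is also unnecessary: the device $g=e^{\sigma_{1}}-e^{\sigma_{2}}$ that you mention as an ``escape hatch'' (and which is exactly the paper's device) absorbs the coefficient with no correction terms at all, so the induction on $k$ with error terms can simply be dropped, and with it the delicate bookkeeping you anticipate.
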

\begin{proof} 
 Let $\omega$ be a real form of type $(k,k)$. According to Lemma \ref{Connes}, it is a finite sum of the terms
$$h_{0}dh_{1}\wedge dh_{2}\wedge\cdots\wedge dh_{2k}=h_{0}(\partial +\bar{\partial})h_{1}\wedge (\partial +\bar{\partial}) h_{2}\wedge\cdots\wedge (\partial +\bar{\partial}) h_{2k},$$
where $h_{0},\dots,h_{2k}$ are smooth real-valued functions on $X$ with $h_0 >0$. Notice that any smooth function $h_0$ maybe written as the difference of two smooth positive functions $h_0 =e^{(h_0)^2}+h_0 - e^{(h_0)^2}$.
Since $\omega$ is of type $(k,k)$, it is a finite sum of $(k,k)$-components of the forms above. Every such component is a function times the wedge product of the following factors (where $h$ and $g$ are some of the $h_{i}$'s)
$$\partial h\wedge \bar{\partial} g + \bar{\partial} h\wedge \partial g = \sqrt{-1}(\partial f \wedge\bar{\partial}\bar{f}-\partial h\wedge\bar{\partial} h - \partial g\wedge\bar{\partial} g),
$$
where $f=h+\sqrt{-1}g$.
\end{proof}

For compact complex manifolds (or rather for manifolds admitting a finite coordinate open cover) and for submanifolds of $\mathbb{C}^n$, there is a different version of Lemma \ref{Connes-hol}.
\begin{lemma} \label{Connes-hol-2} Let $X$ be a compact complex manifold or
a submanifold of $\C^{n}$. Every $\omega\in\A^{k,k}(X,\C)\cap\A^{2k}(X,\R)$ can be written as a finite linear combination of wedge products of real $(1,1)$-forms of the type $\sqrt{-1}\,h\bar{\partial}{\partial} \rho$ where $h$ and $\rho$ are smooth real functions on $X$. 
\end{lemma}
\begin{proof}
 Let $\{U_{\alpha}\}_{\alpha\in A}$ be a finite coordinate open cover of $X$ and $\{\rho_{\alpha}\}_{\alpha\in A}$ be a partition of unity subordinate to it, so that
$\omega=\sum_{\alpha\in A}\rho_{\alpha}\left.\omega\right|_{U_{\alpha}}$.
Denoting by 
$$z^{1}=x^{1}+\sqrt{-1}\,y^{1},\dots, z^{n}=x^{n}+\sqrt{-1}\,y^{n}$$ 
local complex coordinates in $U_{\alpha}$, we can write 
$$\left.\omega\right|_{U_{\alpha}}=\sum_{I,J}f_{\alpha, IJ}dx^{i_{1}}\wedge\dots\wedge dx^{i_{l}}\wedge dy^{j_{1}}\wedge\dots\wedge dy^{j_{m}},$$
where $I=\{i_{1},\dots,i_{l}\}, J=\{j_{1},\dots,j_{m}\}$, $f_{\alpha, IJ}\in C^{\infty}(U_{\alpha},\R)$  and $1\leq i_{1}<\dots<i_{l}\leq n$, $1\leq j_{1}<\dots<j_{m}\leq n$, $l+m=2k$.
Since the form $\omega$ was supposed to be of $(k,k)$ type, so are the forms $\left.\omega\right|_{U_{\alpha}}$. On the other hand, the $(k,k)$-component of these forms can be obtained by rewriting them in complex coordinates using
	$$dx^{i}=\frac{1}{2}(dz^{i}+d\bar{z}^{i}),\quad dy^{i}=\frac{1}{2\sqrt{-1}}(dz^{i}-d\bar{z}^{i}),\;\;i=1,\dots,n,$$
and collecting terms of the type $(k,k)$. If one of such terms has a factor
$dz^{i}\wedge d\bar{z}^{l}$, $i,k\in I$, then it necessarily has a factor 
$$(dz^{i}\wedge d\bar{z}^{l}+d\bar{z}^{i}\wedge dz^{l}),$$ 
if it comes from $dx^{i}\wedge dx^{l}$. Similarly, one has factors $(dz^{j}\wedge d\bar{z}^{m}+d\bar{z}^{j}\wedge dz^{m})$, $j,m\in J$, coming from $dy^{j}\wedge dy^{m}$, and
$\sqrt{-1}(dz^{i}\wedge d\bar{z}^{j}-d\bar{z}^{i}\wedge dz^{j})$, coming from $dx^{i}\wedge dy^{j}$, $i\in I$ and $j\in J$. 

In the first two cases the corresponding factors can be written as 
$$2\sqrt{-1}\del\delb(\im(z^{i}\bar{z}^{l}))\quad\text{and}\quad 2\sqrt{-1}\del\delb(\im(z^{j}\bar{z}^{m})),$$ 
whereas in the third case it takes the form $2\sqrt{-1}\del\delb(\re(z^{i}\bar{z}^{j}))$. Let $K_{\alpha}$ be a compact set such that $\mathrm{supp}\,\rho_{\alpha}\subsetneqq K_{\alpha}\subset U_{\alpha}$ and let $b_{\alpha}$ be a corresponding ``bump function" --- a smooth function on $X$ which is 1 on $\mathrm{supp}\,\rho_{\alpha}$ and zero outside $K_{\alpha}$. Then we see that all the terms will take the form
$2\sqrt{-1}\del\delb\rho$, where $\rho(z)=\im(b_{\alpha}(z)z^{i}\bar{z}^{j})$ or $\rho(z)=\re(b_{\alpha}(z)z^{i}\bar{z}^{j})$. This proves the first part of the statement.
The second statement of the lemma is obvious from the construction. For submanifolds of $\mathbb{C}^n$, the local part of the argument above carries over globally by extension of the quantities involved to a tubular neighborhood.
\end{proof}

\begin{remark} Let $\omega$ be a real differential form of pure type on a complex manifold $X$, $\omega\in\A^{k,k}(X,\C)\cap\A^{2k}(X,\R)$.
We call the form $\omega$ \emph{elementary}, if it is a wedge product of $(1,1)$-forms $\sqrt{-1}\,h\delb\del\rho$ with real-valued $h$ and $\rho$, and we call the form $\omega$ \emph{composite}, if it  is a wedge product of $(1,1)$-forms $\sqrt{-1}\,e^{\sigma}\del f\wedge\delb\bar{f}$. According to Lemmas
\ref{Connes-hol} and \ref{Connes-hol-2}, on a compact complex manifold $X$
 every composite form is a finite linear combination of elementary forms and conversely, every elementary form is a finite linear combination of composite forms. This is reminiscent of the ``nuclear democracy'' in the bootstrap model of the $S$-matrix theory in particle physics.
\end{remark}  

We have the following complex manifold analogue of Proposition \ref{Venice}.
\begin{theorem} \label{Venice-hol} For every $\delb\del$-exact form $\omega\in\A(X,\C)\cap \A^{\mathrm{even}}(X,\R)$ on a complex manifold $X$ there is a trivial vector bundle $E$ over $X$ with two Hermitian metrics $h_{1}$ and $h_{2}$ such that
$$\ch(E,h_{1})-\ch(E,h_{2})=\omega.$$
\end{theorem}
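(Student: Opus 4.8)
The plan is to reduce the statement to the two technical lemmas already proved, namely Lemma \ref{Main} (and its Corollary \ref{Id}) and Lemma \ref{Connes-hol}. Since $\omega$ is $\delb\del$-exact, by Lemma \ref{Connes-hol} applied to the real $(k-1,k-1)$-form appearing under $\delb\del$ — or, more directly, by writing $\omega$ itself as a sum of wedge products of the two elementary types $\sqrt{-1}\,\delb\del\rho$ and $\sqrt{-1}\,e^{\sigma}\del f\wedge\delb\bar f$ and then using that $\omega$ lies in $\im\,\delb\del$ — it suffices to realize each such building block as a difference $\ch(E,h_{1})-\ch(E,h_{2})$, because differences of Chern character forms on trivial bundles are closed under sums (take direct sums of bundles) and the whole expression is manifestly $\delb\del$-exact on both sides.

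First I would handle the \emph{pure-type} forms. For a single factor $\sqrt{-1}\,e^{\sigma}\del f\wedge\delb\bar f$ (up to normalization), the Remark following Corollary \ref{Id} already records that for the rank-$2$ metric $h(\sigma,f)$ one has $\ch_{2}(E_{2},h(\sigma,f))-\ch_{2}(E_{1},e^{\sigma})=-\tfrac{1}{(2\pi)^2}\delb\del(e^{-\sigma}\del f\wedge\delb\bar f)$; replacing $\sigma$ by $-\sigma$ (or absorbing signs) produces the desired form. For a pure-type wedge product of $r-1$ such factors, the main identity in Corollary \ref{Id} (the case $k=r$) expresses the alternating sum of $\ch_{r}(E_{l},h(\sigma,f_{i_1},\dots,f_{i_{l-1}}))$ over all subsets as a $\delb\del$-exact multiple of $\del f_1\wedge\delb\bar f_1\wedge\cdots\wedge\del f_{r-1}\wedge\delb\bar f_{r-1}$ weighted by $e^{(r-1)\sigma}$; grouping the positive and negative terms into two direct-sum bundles with appropriate metrics yields precisely the required difference $\ch(E,h_1)-\ch(E,h_2)$ equal to the pure-type form (after adjusting $\sigma$ and overall constants). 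So pure-type forms, and hence their sums, are in the image.

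The remaining — and genuinely more delicate — case is that of \emph{composite} forms, where at least one factor is $\sqrt{-1}\,\delb\del\rho$. Here the idea is to peel off one $\delb\del\rho$ factor at a time and induct on the number of such factors (with the pure-type case as the base of the induction). If $\omega=\sqrt{-1}\,\delb\del\rho\wedge\psi$ with $\psi$ a $\delb\del$-closed real form of pure type of one lower ``composite degree,'' one should find a trivial bundle and a pair of metrics whose Chern character difference produces $\delb\del\rho\wedge\psi$ modulo lower-complexity composite forms, then apply the inductive hypothesis to the error term. A natural mechanism is to combine the $c_1$-type contribution $c_1(E_r,h)=\tfrac{\sqrt{-1}}{2\pi}\delb\del\sigma$ from Lemma \ref{Main} — which supplies the $\delb\del\rho$ factor upon setting $\sigma=\rho$ — with a pure-type construction realizing $\psi$, using the product formula $\ch=\sum\ch_k$ and the fact that $c_1(E_r,h)c_r(E_r,h)$ appears controllably in the $k=r+1$ identity of Corollary \ref{Id}; one may also need to tensor or direct-sum with the rank-$1$ bundle $(E_1,e^{\rho})$ to isolate the cross term. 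The main obstacle I anticipate is precisely this bookkeeping: ensuring that after extracting the desired top-degree product $\delb\del\rho\wedge\psi$ all the lower-order terms generated by the expansion of $\exp$ and by Newton's identities are themselves $\delb\del$-exact real $(j,j)$-forms of strictly smaller composite complexity, so that the induction closes; degree bounds ($k<n$ is not needed here, but the number of factors is finite) and the precise constants from Lemma \ref{Main} will have to be tracked carefully, and one must keep all intermediate forms real, which the $g^{*}g$ form of the metric guarantees.
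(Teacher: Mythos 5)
Your reduction to building blocks via Lemma \ref{Connes-hol} and your use of Lemma \ref{Main} and Corollary \ref{Id} match the paper's strategy in outline, but there is a genuine gap at the step you declare finished: the pure-type case. Corollary \ref{Id} controls only the components $\ch_k$ with $k\le r$ of the alternating sum (those of degree $<r$ vanish and the degree-$r$ component is the desired pure form), whereas the full difference $\ch(E,h_1)-\ch(E,h_2)$ obtained by grouping the positive and negative terms into two direct sums also contains the components $\ch_{r+1},\dots,\ch_n$, and these do \emph{not} vanish --- the second identity of Corollary \ref{Id} computes $\ch_{r+1}$ explicitly as a nonzero multiple of $c_1c_r$. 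So your bundles realize $\omega$ only modulo a higher-degree tail, and cancelling that tail is precisely the heart of the paper's proof: the ``Gaussian elimination'' step realizes the composite form $-\ch_{k+1}\mathcal{F}_k=\tfrac{\sqrt{-1}}{2\pi}\,\delb\del\rho\wedge\omega$ by a tensor product of virtual bundles, adds it on, and iterates in \emph{increasing} degree, terminating only because forms of degree $>2n$ vanish on $X$. Note the correction terms have higher degree, not ``strictly smaller composite complexity,'' so the induction you propose (peeling off $\delb\del\rho$ factors with pure type as base) does not close as stated.

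Two further ingredients of the paper are missing from your outline. First, the $(1,1)$ case is itself nontrivial: $\ch(E_1,e^{\rho})=\exp\bigl(\tfrac{\sqrt{-1}}{2\pi}\delb\del\rho\bigr)$ carries all higher powers, and the paper kills them with a Vandermonde-type system, choosing rational exponents $\beta_i$ and integer multiplicities $c_i$ so that the combination of line bundles $(E_1,e^{\beta_i\rho})$ yields exactly $\tfrac{\sqrt{-1}}{2\pi}\delb\del\rho$; setting $\sigma=\rho$ in $c_1(E_r,h)$, as you suggest, does not by itself isolate this term inside the Chern character. Second, to multiply building blocks one needs the virtual-bundle multiplicativity $\ch\mathcal{W}_1\,\ch\mathcal{W}_2=\ch\mathcal{W}$ with $W=(W_1\otimes W_2)\oplus(W_1\otimes W_2)$ and the crossed metrics $h_1=(h_{11}\otimes h_{21})\oplus(h_{12}\otimes h_{22})$, $h_2=(h_{11}\otimes h_{22})\oplus(h_{12}\otimes h_{21})$; merely tensoring with $(E_1,e^{\rho})$ multiplies a difference of Chern characters by the full exponential and does not isolate the cross term. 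With these three ingredients supplied, your outline becomes the paper's argument.
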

\begin{proof} 
It is convenient to introduce a  virtual Hermitian bundle $\mathcal{E}=E-E$ with corresponding Hermitian metrics $h_{1}$ and $h_{2}$, and to rewrite the above equation as $\ch\,\mathcal{E}=\omega$. 
The Chern character form defined this way for virtual Hermitian bundles is obviously additive: if $\mathcal{W}_{1}=W_{1}-W_{1}$ with Hermitian metrics $h_{11}$ and  $h_{12}$, and $\mathcal{W}_{2}=W_{2} - W_{2}$ with Hermitian metrics $h_{21}$ and $h_{22}$, then
$$\ch\,\mathcal{W}_{1} + \ch\,\mathcal{W}_{2}=\ch\,\mathcal{W},$$
where $\mathcal{W}=W-W$ and $W  =W_{1}\oplus W_{2}$ with corresponding Hermitian metrics $h_{1}=h_{11}\oplus h_{21}$ and
$h_{2}=h_{12}\oplus h_{22}$. Slightly abusing notations, we will write $\mathcal{W}=\mathcal{W}_{1}\oplus\mathcal{W}_{2}$. The Chern character form for virtual Hermitian bundles is also multiplicative: 
$$\ch\,\mathcal{W}_{1}\,\ch\,\mathcal{W}_{2}=\ch\,\mathcal{W},$$
where $\mathcal{W}=W-W$ and $W  =(W_{1}\otimes W_{2})\oplus (W_{1}\otimes W_{2})$ with corresponding Hermitian metrics
$$h_{1}=(h_{11}\otimes h_{21})\oplus(h_{12}\otimes h_{22})\quad\text{and}\quad
h_{2} =(h_{11}\otimes h_{22})\oplus(h_{12}\otimes h_{21}).$$
Slightly abusing notations, we will write $\mathcal{W}=\mathcal{W}_{1}\otimes\mathcal{W}_{2}$. 

Let $\omega$ be a real form of degree $(k,k)$, $k>1$\footnote{For $k=1$, we may use a trivial line bundle with metric $e^{\sigma}$.} which is a $\delb\del$ of
a composite form:
\begin{equation} \label{omega}
\omega=\!-(k-2)!\left ( \frac{\sqrt{-1}}{2\pi} \right)^{k} \bar{\partial}{\partial}\left(e^{(k-1)\s}\del f_{1}\wedge\delb\bar{f}_{1}\wedge\cdots\wedge\del f_{k-1}\wedge\delb\bar{f}_{k-1}\right)\!.
\end{equation}
It follows from Corollary \ref{Id} that
\begin{equation} \label{Gauss}
\omega=\ch_{k}\mathcal{F}_{k}\quad\text{and}\quad\ch_{i}\mathcal{F}_{k}=0,\quad i=1,\dots,k-1,
\end{equation}
where the virtual bundle $\mathcal{F}_{k}$ is
$$\mathcal{F}_{k}=\bigoplus_{l=0}^{k}(-1)^l \Lambda ^l E^{*}_k$$
with the naturally induced Hermitian metric. 

In particular, if $\omega$ is a composite form of the top degree $(n,n)$, then
$$\omega = \ch\,\mathcal{F}_n.$$
Now, we may use an induction argument to finish the proof. Namely, suppose that
the statement holds for all forms of degrees $(l,l)$, $k<l\leq n$, and let $\omega$ be a composite $(k,k)$-form given by \eqref{omega}. According to \eqref{Gauss},
$\omega- \ch\,\mathcal{F}_{k}$ is a sum of forms of degrees $(l,l)$ with $l>k$,
so that by the induction hypothesis there exists a virtual Hermitian bundle $\mathcal{F}$ such that $\omega-\ch\,\mathcal{F}_{k} = \ch\,\mathcal{F}$. Thus 
\begin{equation*}
\omega = \ch\,\mathcal{E}, \quad\text{where}\quad \mathcal{E}=\mathcal{F}_{k}\oplus \mathcal{F}.
\end{equation*}
For a general $\delb\del$-exact form $\omega$ of degree $(k,k)$ we have 
$$\omega=\omega_{1}+\cdots +\omega_{m}-\omega_{m+1}-\cdots -\omega_{N},$$
where $\omega_{i}$ are composite forms, so that 
$$\omega=\ch\,\mathcal{E}\quad\text{where}\quad \mathcal{E}= (\mathcal{E}_{1}\oplus\cdots\oplus\mathcal{E}_{m})-(\mathcal{E}_{m+1}\oplus\cdots\oplus\mathcal{E}_{N}).$$ 
\end{proof}

\begin{remark} When $X$ is compact or is a submanifold of $\C^{n}$, we can give another proof using Lemma \ref{Connes-hol-2}.
Firstly, the statement holds for $(1,1)$-forms. Namely, since every real $\delb\del$-exact $(1,1)$-form is given by $\omega=\frac{\sqrt{-1}}{2\pi}\,\delb\del\s$, 
where $\s\in C^{\infty}(X,\R)$, consider the trivial holomorphic line bundle $E_{1}$ with the Hermitian metric $h=e^{\s}$, so that  
$$\ch(E_{1},h)=\exp\omega=1+\omega+\frac{1}{2}\omega^{2}+\dots+\frac{1}{n!}\omega^{n}.$$
To get rid of all terms in this expression except $\omega$, consider Hermitian metrics $e^{\alpha_{i}\s}$, $i=1,\dots, n+1$, and choose pairwise distinct $\alpha_{i}$ such that the following system of equations
$$
\begin{pmatrix} 1 & 1 & 1& \dots &1 \\
\alpha_{1} & \alpha_{2} & \alpha_{3} & \dots & \alpha_{n+1} \\
\alpha^{2}_{1} & \alpha^{2}_{2} & \alpha_{3}^{2 } & \dots & \alpha^{2}_{n+1}\\
\vdots & \vdots & \vdots & \vdots & \vdots \\
\alpha^{n}_{1} & \alpha^{n}_{2} & \alpha_{3}^{n } & \dots & \alpha^{n}_{n+1}\\
\end{pmatrix}\begin{pmatrix} r_{1 }\\
r_{2}\\
r_{3} \\
\vdots\\
r_{n+1}
\end{pmatrix}=\begin{pmatrix} 0\\
1\\
0\\
\vdots\\
0
\end{pmatrix}
$$
has an integer solution $r_{1},\dots,r_{n+1}$. Namely, for any choice of $n+1$ different rational numbers $\alpha_{i}$ numbers $r_{i}$ are also rational. If their least common denominator is $N>1$, then for $\beta_{i}=\alpha_{i}/N$ the corresponding solution is integral. Now putting
$$(E,h_{1})=\bigoplus_{r_{i}>0} r_{i}(E_{1},e^{\beta_{i}\s})\quad\text{and}\quad (E,h_{2})=\bigoplus_{r_{i}<0}(-r_{i})(E_{1},e^{\beta_{i}\s}),$$
where $n(L,h)$ stands for the direct sum of $n$ copies of a line bundle $L$ with the Hermitian metric $h$, we get 
$$\ch(E,h_{1})-\ch(E,h_{2})=\omega.$$ 
Now let $\omega$ be a real form of degree $(k,k)$ which is a $\delb\del$ of an
elementary form:
$$\omega= \left ( \frac{\sqrt{-1}}{2\pi} \right)^{k} \bar{\partial}{\partial}\left(\rho_{1}\delb\del\rho_{2}\wedge\cdots\wedge\delb\del\rho_{k}\right)= \left ( \frac{\sqrt{-1}}{2\pi} \right)^{k} \bar{\partial}{\partial}\rho_{1}\wedge\delb\del\rho_{2}\wedge\cdots\wedge\delb\del\rho_{k}.$$
Then 
$$\omega=\ch\,\mathcal{E},\quad\text{where}\quad \mathcal{E}=\mathcal{E}_{1}\otimes\cdots\otimes\mathcal{E}_{k}.$$ 
For general non-elementary forms, the result follows by means of a linear combination.
\end{remark}
\begin{remark} It immediately follows from the second statement of Lemma \ref{Connes-hol} and the proof of Theorem \ref{Venice-hol}, that if form $\omega$ vanishes on open $U\subset X$, then Hermitian metrics $h_{1}$ and $h_{2}$ can be chosen such that $h_{1}=h_{2}=I$ --- identity matrix ---  on $U$.
\end{remark}
\begin{corollary} \label{V-complex} For every $\omega\in\A(X,\C)\cap \A^{\mathrm{even}}(X,\R)$ of degree not greater than $2n-2$, there is a trivial vector bundle $E$ over $X$ with two Hermitian metrics $h_{1}$ and $h_{2}$ such that in $\tilde{\mathcal{A}}(X,\C)$
$$\BC(E;h_{1},h_{2})=\omega.$$
\end{corollary}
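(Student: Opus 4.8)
The plan is to carry out the complex-manifold analogue of the proof of Corollary \ref{V-real}: replace $S^{1}$ by $\PP^{1}$, the Green's function $g(\theta)$ by $\log|z|^{2}$, the relation $d_{\theta}g=(\delta_{\pi}-\delta_{0})d\theta$ by the Poincar\'e--Lelong formula $\tfrac{\sqrt{-1}}{2\pi}\delb\del\log|z|^{2}=\delta_{\infty}-\delta_{0}$, and Proposition \ref{Venice} by Theorem \ref{Venice-hol}. The one genuinely new complication is the twist by $\mathcal{O}(1)$ built into the Gillet--Soul\'e formula \eqref{bc-def}: the bundle $\tilde E=p^{*}E\otimes\mathcal{O}(1)$ occurring there is never trivial over $X\times\PP^{1}$, whereas Theorem \ref{Venice-hol} produces bundles that are trivial over the base, so the twist has to be reinserted by hand and shown to contribute nothing modulo $\im\,\del+\im\,\delb$.

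By linearity it suffices to treat a form $\omega\in\A^{k,k}(X,\C)\cap\A^{2k}(X,\R)$ with $k\le n-1$. Let $p\colon X\times\PP^{1}\to X$ be the projection, fix a smooth real function $\chi$ on $\PP^{1}$ with $\chi\equiv 0$ near $z=0$ and $\chi\equiv 1$ near $z=\infty$, and put $\Psi=\chi\,p^{*}\omega$, a real $(k,k)$-form on $X\times\PP^{1}$ with $i_{0}^{*}\Psi=0$ and $i_{\infty}^{*}\Psi=\omega$. Then $\eta:=\tfrac{\sqrt{-1}}{2\pi}\delb\del\Psi$ is a $\delb\del$-exact real form of type $(k+1,k+1)$ on the compact complex manifold $X\times\PP^{1}$, so Theorem \ref{Venice-hol} applied to $X\times\PP^{1}$ yields a trivial bundle $\tilde V=(X\times\PP^{1})\times\C^{s}$ with Hermitian metrics $g_{1},g_{2}$ such that $\ch(\tilde V,g_{1})-\ch(\tilde V,g_{2})=\eta$; taking $g_{2}$ to be the standard constant metric, so that $\ch(\tilde V,g_{2})=s$, we get $\ch(\tilde V,g_{1})=s+\eta$.

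Now take $E=X\times\C^{s}$ over $X$, so the bundle appearing in \eqref{bc-def} is $\tilde E=p^{*}E\otimes\mathcal{O}(1)=\tilde V\otimes\mathcal{O}(1)$; equip it with $\tilde h=g_{1}\otimes h_{\mathrm{FS}}$, where $h_{\mathrm{FS}}$ is the Fubini--Study metric on $\mathcal{O}(1)$, and put $h_{1}=i_{0}^{*}\tilde h$, $h_{2}=i_{\infty}^{*}\tilde h$, two Hermitian metrics on the trivial bundle $E$. Multiplicativity of the Chern character under tensor products gives $\ch(\tilde E,\tilde h)=\ch(\tilde V,g_{1})\,\ch(\mathcal{O}(1),h_{\mathrm{FS}})=(s+\eta)(1+\gamma)$, where $\gamma=c_{1}(\mathcal{O}(1),h_{\mathrm{FS}})$ (and its pullback to $X\times\PP^{1}$) satisfies $\gamma^{2}=0$. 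Expanding $(s+\eta)(1+\gamma)=s+s\gamma+\eta+\eta\wedge\gamma$ and integrating term by term in \eqref{bc-def}: $\int_{\PP^{1}}s\log|z|^{2}=0$ (no component along $\PP^{1}$); $\int_{\PP^{1}}s\gamma\log|z|^{2}=0$ because $\int_{\PP^{1}}\gamma\log|z|^{2}=0$ by the symmetry $z\mapsto 1/z$, under which $\gamma$ is invariant and $\log|z|^{2}$ changes sign; $\int_{\PP^{1}}\eta\log|z|^{2}=i_{\infty}^{*}\Psi-i_{0}^{*}\Psi=\omega$ by the Poincar\'e--Lelong formula, since $p^{*}\omega$ carries no differentials along $\PP^{1}$, so that the only part of $\delb\del\Psi$ of top degree along $\PP^{1}$ is $(\delb_{z}\del_{z}\chi)\wedge p^{*}\omega$ (with $\del_{z},\delb_{z}$ the parts of $\del,\delb$ along $\PP^{1}$), while the remaining ``cross'' and ``pure'' terms integrate to $0$; and $\int_{\PP^{1}}\eta\wedge\gamma\,\log|z|^{2}=c\,\tfrac{\sqrt{-1}}{2\pi}\delb\del\omega$ for a constant $c$, only the $\PP^{1}$-degree-zero part of $\eta$ surviving the wedge with $\gamma$, so this last term lies in $\im\,\delb\subset\im\,\del+\im\,\delb$. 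Therefore $\BC(E;h_{1},h_{2})=\omega$, with $E$ a trivial bundle over $X$.

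The main obstacle is twofold: the $\mathcal{O}(1)$-bookkeeping just described (handled by taking $E$ trivial of the right rank and observing that the extra $\gamma$-term is $\delb$-exact), and the precise evaluation of $\int_{\PP^{1}}\eta\log|z|^{2}$ modulo $\im\,\del+\im\,\delb$, which is the complex analogue of the fibre-integration step in the proof of Corollary \ref{V-real} and relies on the Poincar\'e--Lelong formula together with the facts that fibre integration over $\PP^{1}$ sees only the $\PP^{1}$-top component and that $\log|z|^{2}$ is annihilated by $\del_{X}$ and $\delb_{X}$. A routine remaining point is to fix the signs and normalizations in Poincar\'e--Lelong so that the answer comes out $+\omega$ (interchanging $h_{1}$ and $h_{2}$ if not).
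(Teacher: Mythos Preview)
Your overall strategy --- lift $\omega$ to $X\times\PP^{1}$, apply Theorem \ref{Venice-hol} there, and integrate back using the Poincar\'e--Lelong formula --- is exactly the paper's. But there is a genuine gap at the step ``taking $g_{2}$ to be the standard constant metric, so that $\ch(\tilde V,g_{2})=s$''. Theorem \ref{Venice-hol} does \emph{not} let you choose one of the two metrics to be constant; this is the key difference from the smooth case (Proposition \ref{Venice}), where one connection really is $d$. Already for a $(1,1)$-form $\tfrac{\sqrt{-1}}{2\pi}\delb\del\sigma$ the proof of Theorem \ref{Venice-hol} produces \emph{both} $h_{1}$ and $h_{2}$ as nontrivial direct sums of line-bundle metrics $e^{\beta_{i}\sigma}$, and for higher degrees the situation only gets worse. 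So in general $\ch(\tilde V,g_{1})\neq s+\eta$, and your expansion $(s+\eta)(1+\gamma)$ is not the integrand in \eqref{bc-def}; an uncontrolled term $\int_{\PP^{1}}\ch(\tilde V,g_{2})(1+\gamma)\log|z|^{2}$ is missing.

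The fix is to keep both metrics. Using the remark after Theorem \ref{Venice-hol} (your $\eta$ vanishes near $z=0$, so $g_{1}$ and $g_{2}$ can both be taken equal to $I$ there), set $h_{j}=i_{\infty}^{*}g_{j}$ and compute the \emph{difference} $\bc(E;I,h_{1})-\bc(E;I,h_{2})$: the unwanted $g_{2}$-contribution then cancels, only $\int_{\PP^{1}}\eta\,(1+\gamma)\log|z|^{2}$ survives, and your term-by-term evaluation goes through to give $\omega$ modulo $\im\,\del+\im\,\delb$. This is precisely what the paper does. Incidentally, your explicit handling of the $\mathcal{O}(1)$-twist is more careful than the paper's presentation, which simply writes the integrand with the trivial bundle $\tilde E$ (justified since for trivial $E$ one may use $p^{*}E$ in place of $p^{*}E\otimes\mathcal{O}(1)$, the Bott--Chern class being characterized by \eqref{bc-def-1} and functoriality, or equivalently by your observation that the extra $\gamma$-term is $\delb\del$-exact).
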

\begin{proof} It is analogous to the proof of Corollary \ref{V-real}. Namely, let
$\Omega\in\A(X\times\PP^{1},\C)\cap \A^{\mathrm{even}}(X\times\PP^{1},\R)$ be such that under the inclusion map $i_{p}: X\rightarrow X\times\PP^{1}$ one
has $i_{\infty}^{\ast}(\Omega)=-\omega$ and $i_{0}^{\ast}(\Omega)=0$ in some neighborhood of $0$ in $\PP^{1}$. It follows from Theorem \ref{Venice-hol} that there is a trivial vector bundle $\tilde{E}$ over $X\times\PP^{1}$ with two Hermitian metrics $\tilde{h}_{1}$ and $\tilde{h}_{2}$ such that
$$\frac{\sqrt{-1}}{2\pi}\,\delb\del\Omega =\ch(\tilde{E},\tilde{h}_{1})-\ch(\tilde{E},\tilde{h}_{2}),$$
where the metrics $\tilde{h}_{1}$ and $\tilde{h}_{2}$ can be chosen such that
$i^{\ast}_{0}(\tilde{h}_{1})=i^{\ast}_{0}(\tilde{h}_{2})=I$. Denoting by $E$ a trivial vector bundle over $X$ --- a pullback of $\tilde{E}$ --- and putting $h_{1}=i^{\ast}_{\infty}(\tilde{h}_{1}),  h_{2}=i^{\ast}_{\infty}(\tilde{h}_{2})$, we obtain, modulo
$\im\,\del +\im\,\delb$, 
\begin{align*}
\bc(E;I,h_{1})-\bc(E;I,h_{2}) & = \int_{\PP^{1}}\frac{\sqrt{-1}}{2\pi}\,\delb\del\,\Omega\log|z|^{2} \\
& = \int_{\PP^{1}}\frac{\sqrt{-1}}{2\pi}\,\delb_{z}\del_{z}\Omega\log|z|^{2} \\
& = \int_{\PP^{1}}\frac{\sqrt{-1}}{2\pi}\,\Omega\,\delb_{z}\del_{z}\log|z|^{2} \\
& =i^{\ast}_{\infty}(\Omega)-i_{0}^{\ast}(\Omega)\\
& =-\omega.
\end{align*}
Therefore in $\tilde{\mathcal{A}}(X,\C)$,
$$\omega=-\BC(E; I,h_{1})+\BC(E; I,h_{2})=\BC(E;h_{1},h_{2}). 
$$
\end{proof}
\section{Applications}\label{Sect:4}
In this section we describe some applications of Lemma \ref{Algebra} and Corollary \ref{V-complex}. 
\subsection{Differential $K$-theory} \label{Sub:4.1}
Recall that according to the definition of differential $K$-theory in \cite{GS},  the $K$-group $\hat{K}_{0}(X)$ for a complex manifold $X$ is defined as the free abelian group generated by the triples $(E,h,\eta)$,  where $E$ is a holomorphic vector bundle over $X$ with Hermitian metric $h$ and $\eta\in\tilde{A}(X,\C)$, with the following relations. For every exact sequence $\cE$ 
\begin{equation*}
 \begin{CD} 0 @>>> F @>i>> E  @>p>> H  @>>> 0 \end{CD}
 \end{equation*}
of holomorphic vector bundles over $X$, endowed with arbitrary Hermitian metrics $h_{F}, h_{E}$ and $h_{H}$, impose 
\begin{equation} \label{gs-rel}
(F,h_{F},\eta') + (H,h_{H},\eta'')=(E,h_{E},\eta'+\eta'' +\BC(\cE,h_{E},h_{F},h_{H})),
\end{equation}
where $\BC(\cE,h_{E},h_{F},h_{H})=\bc(\cE,h_{E},h_{F},h_{H})\!\!\!\mod\!(\im\,\del +\im\,\delb)$. It follows from \eqref{gs-rel} that in $\hat{K}_{0}(X)$
\begin{equation} \label{well-def}
(E,h_{1},\eta_{1})=(E,h_{2},\eta_{1}+\BC(E,h_{1},h_{2})).
\end{equation}
Now following \cite{SS}, we define two Hermitian metrics $h_{1}$ and $h_{2}$ on the holomorphic vector bundle $E$ to be equivalent, if $\BC(E,h_{1},h_{2})=0$, and define a \emph{structured} holomorphic Hermitian vector bundle $\mathcal{E}$ as a pair $(E,\{h\})$, where $\{h\}$ is the equivalence class of a Hermitian metric $h$. Our goal is to impose relations on the free abelian group generated by $\mathcal{E}$ such that the resulting group $H\hat{K}_{0}(X)$ is isomorphic to the ``reduced'' differential $K$-theory group $\hat{K}_{0}^{\mathrm{rd}}(X)$ - a subgroup of $\hat{K}_{0}(X)$ with forms $\eta$ of degrees not greater than $2n-2$.

	First we observe that it follows from \eqref{well-def} that the mapping  
\begin{equation} \label{map}
\mathcal{E}=(E,\{h\}) \mapsto \varepsilon(\mathcal{E})=(E,h,0)\in \hat{K}_{0}^{\mathrm{rd}}(X)
\end{equation}
is well-defined. Next we show that when extended to to the free abelian group generated by the structured holomorphic Hermitian bundles, this mapping is onto.
Indeed, for every $\eta\in\tilde{A}(X,\C)\cap A^{\mathrm{even}}(X,\R)$ of degree not greater than $2n-2$, let $F$ be the trivial vector bundle over $X$ with two Hermitian metrics $h_{1}$ and $h_{2}$ such that, according to Corollary \ref{V-complex},
$$(F,h_{1},\eta)=(F,h_{2},0).$$
in $\hat{K}_{0}(X)$. Since
\begin{align*}
(E\oplus F,h\oplus h_{1},\eta) & = (E,h,0) + (F,h_{1},\eta) \\
& =(E,h, \eta) + (F,h_{1},0),
\end{align*}
we obtain
$$(E,h,\eta)=(E,h,0) +(F,h_{2},0) - (F,h_{1},0).$$
Finally, we define the group $H\hat{K}_{0}(X)$ as the quotient of the free abelian group generated by $\mathcal{E}$ modulo the relations --- pullbacks of the defining relations for $\hat{K}_{0}^{\mathrm{rd}}(X)$ by the mapping $\varepsilon$.
Explicitly, for every exact sequence $\cE$ of holomorphic vector bundles over $X$ with Hermitian metrics $h_{F}, h_{E}$ and $h_{H}$ satisfying $\BC(\cE; h_{E},h_{F},h_{H})=0$, we impose 
$$(F,\{h_{F}\})+(H,\{h_{H}\})=(E,\{h_{E}\}).$$
\begin{remark}
The construction of the group $H\hat{K}_{0}(X)$ is the first step in defining a `Simons-Sullivan model' of the differential $K$-theory of Gillet and Soul\'e. The main open problem is to describe the group operation directly in terms of the structured bundles $\mathcal{E}$. The group structure of the Simons-Sullivan model $\Str(X)$ is given by direct sums \cite{SS}, which is no longer true for the group $H\hat{K}_{0}(X)$. This is because in general short exact sequences $\cE$  do not split holomorphically.  Even in the case when they do, the issue of finding ``structured inverses" remains a challenging problem simply because the proof that works in the smooth context breaks down for fundamental reasons. One of the issues has to do with finding metrics whose Chern forms are of a certain type. (See \cite{Vam} for the discussion of a related question.)
\end{remark}
\subsection{Bott-Chern forms for short exact sequences} \label{Sub:4.2}
Here we extend the computations in \cite[Sect. 4]{BC} and present an
explicit formula for the Bott-Chern form for a short exact sequence $\cE$
\begin{equation*}
 \begin{CD} 0 @>>> F @>i>> E  @>p>> H  @>>> 0 \end{CD}
 \end{equation*}
of holomorphic vector bundles over $X$ in the case when $F$ is a line bundle. Specifically, we consider the case when a Hermitian metric $h$ on $E$ 
defines a metric $h_F$ on $F$ by the restriction on $i(F)$, and a metric $h_H$ on $H$ --- by the $C^\infty$ isomorphism between $H$ and the orthogonal complement $i(F)^{\perp}$ of $i(F)$ in $E$. In other words, there is a $C^\infty$ isometric isomorphism 
\begin{equation} \label{isom}
f: E\simeq F\oplus H, \quad f=i^{\ast}\oplus p,
\end{equation}
where $\ast$ denotes the adjoint map with respect to given metrics. The inverse map is given by $f^{-1}=i+p^{\ast}$. We have
$i^{\ast} i=I_F$, $p p^\ast=I_H$ --- corresponding identity maps in $F$ and $H$, and $i i^{\ast} =P_F$, $p^\ast p=P_H$ --- corresponding orthogonal projections from $E$ onto $i(F)$ and $i(F)^\perp$. The canonical connection $\nabla$ on $(E,h)$
gives rise, respectively, to the canonical connections $\nabla_{F}=P_{F}\circ\nabla\circ P_{F}$ and $\nabla_{H}=P_{H}\circ\nabla \circ P_{H}$ on $(F,h_{F})$ and $(H,h_{H})$ with the curvatures $\Theta_{F}$ and $\Theta_{H}$.
Denoting by $\left.\nabla\right|_{F}=\nabla\circ P_{F}$ restriction on $\nabla$ to $F\subset E$ using \eqref{isom}, we get
$$\nabla_{F} =  \left.\nabla\right|_{F} - A,$$
where $A= -P_{F}\nabla(P_{F}) + \nabla(P_{F})=P_{H}\nabla(P_{F})$ is a $(1,0)$-form with values in $\mathrm{End}(F,H)$, called the second fundamental form of $F$ in $E$ (see, e.g., \cite[p. 72]{GH}). Correspondingly, $A^{\ast}=-P_{F}\nabla(P_{H})$ is a $(0,1)$-form with values in $\mathrm{End}(H,F)$ and 
\begin{align*}
\Theta_{F}=\left.\Theta\right|_{F} + A^{\ast}\wedge A \\
\Theta_{H}=\left.\Theta\right|_{H} + A\wedge A^{\ast} 
\end{align*}
(see, e.g., \cite[p. 78]{GH}). Under the isomorphism \eqref{isom}, $\left.\Theta\right|_{F} =i^{\ast}\,\Theta\, i$ and $\left.\Theta\right|_{H} =p\,\Theta\, p^\ast$.

To compute 
\begin{equation*} 
\tilde{c}(\cE;h)=\tilde{c}(\cE;h,h_{F},h_{H}),
\end{equation*}
In their paper \cite{BC}, Bott and Chern introduced a linear homotopy of connections on $E$
$$\nabla_{u}=\nabla + (u-1)A,\quad 0 \leq u\leq 1,$$
and in \cite[Lemma 4.8]{BC} explicitly computed its curvature $\Theta(u)=\nabla_{u}^{2}$. Using the isomorphism \eqref{isom} it is given
by the following $2\times 2$ block-matrix\footnote{To compare with notations in \cite{BC}, $A=\delta$ and $u=e^{t}$. }
\begin{equation} \label{curv-BC}
\Theta(u) =\begin{bmatrix} \Theta_{F} - u A^{\ast}\wedge A & i^{\ast}\,\Theta\, p^{\ast}  \\
u p\,\Theta \,i & \Theta_{H} - u A\wedge A^{\ast}
\end{bmatrix}
\end{equation}
Since
\begin{equation*}
\Theta(0) =\begin{bmatrix} \Theta_{F}  & i^{\ast}\,\Theta\, p^{\ast}  \\
0 & \Theta_{H} 
\end{bmatrix},
\end{equation*}
and $\Theta(1)=\Theta$, The Bott-Chern homotopy formula \cite[ Eqn. (4.13)]{BC} gives\footnote{The subtraction of $D(0)$ is equivalent to omitting the term $a_0$ in formula (4.18) in \cite{BC}.}
\begin{equation}\label{B-C-1}
\tilde{c}(\cE,h)=\int_{0}^{1}\frac{D(u)-D(0)}{u}du,
\end{equation}
where $D(u)$ -- is the linear in $\lambda$ term in $ \det(\kappa\Theta(u)+\lambda P_{F})$, and $\kappa=\dfrac{\sqrt{-1}}{2\pi}$. This is the main result of Section 4 of \cite{BC}.

It is convenient to introduce generating functions $c_t(E,h)=\sum_{k=0}^r t^k c_k(E,h)$ and similar for the bundle $(F\oplus H,h_F\oplus h_H)$, so that
$$c_t(E,h) -c_t(F\oplus H,h_F\oplus h_H) =\frac{\sqrt{-1}}{2\pi}\,\delb\del  \tilde{c}_t(\cE,h),$$
where
\begin{equation} \label{BC-generate}
\tilde{c}_t(\cE;h)=\sum_{k=1}^r t^k  \tilde{c}_k(\cE;h,h_F,h_H),
\end{equation} 
since $\tilde{c}_0(\cE,h)=0$. In terms of generating functions \eqref{B-C-1} takes the form
\begin{equation}\label{B-C-2}
\tilde{c}_t(\cE,h)=\int_{0}^{1}\frac{D_t(u)-D_t(0)}{u}du,
\end{equation}
where
$$D_t(u)=\sum_{k=1}^r t^k D_k(u),$$
and $D_{k}(u)$ is linear in $\lambda$ term in $\Tr\Lambda^{k}(\kappa\Theta(u)+\lambda P_F)$. 
Explicitly,
\begin{equation} \label{polarize}
D_{k}(u)=\sum_{l=1}^{k}\Tr \kappa\Theta(u)\wedge\cdots \kappa\Theta(u)\wedge P_{F} \wedge\kappa\Theta(u)\cdots\wedge\kappa\Theta(u),
\end{equation}
where in each term $P_{F}$ appears at the $l$-th place in the $k$-fold wedge product. 

Formula \eqref{B-C-2} was re-derived by C. Mourougane \cite{Mou}. It was observed there that
when $F$ is a line bundle,
$$D_{k}(u)=\Tr \Lambda^{k-1}\left(\kappa p\,\Theta(u) \,p^{\ast}\right).$$
which can be easily seen by evaluating \eqref{polarize} in a local unitary frame $e_1,\dots,e_r$ of the bundle $E$ over $U\subset X$ such that under the isomorphism \eqref{isom} $e_1$ spans $F$, and
$e_2,\dots,e_r$ span $H$.
As it follows from \eqref{curv-BC}, the corresponding generating function takes the form 
$$D_{t}(u)=\sum_{k=1}^{r} D_{k}(u)t^{k}=\det\left(I +\tilde{t}\left(\Theta_{H}-u A \wedge A^{\ast}\right)\right),$$
where $\tilde{t}=\kappa t$.
Now we can apply Lemma \ref{Algebra} and express $\tilde{c}_t(\cE,h)$ in a closed form, using only the data given by the isomorphism \eqref{isom}. 
\begin{proposition} \label{B-C-formula-1} Let $\cE$ be a short exact sequence 
\begin{equation*}
 \begin{CD} 0 @>>> F @>i>> E  @>p>> H  @>>> 0 \end{CD}
 \end{equation*}
of holomorphic vector bundles over $X$, equipped with Hermitian metrics $h_F, h$ and $h_H$, where the metric $h_F$ on $F$ is the restriction of the metric $h$ on $i(F)\subset E$, and the metric $h_H$ on $H$ is defined by the $C^\infty$ isomorphism between $H$ and the orthogonal complement $i(F)^{\perp}$ of $i(F)$ in $E$. Let $A$ be the second fundamental form of $i(F)\subset E$. In the case when $F$ is a line bundle, the generating function for the Bott-Chern forms $ \tilde{c}_t(\cE;h)$, defined by \eqref{BC-generate}, is given by the following explicit formula
$$\tilde{c}_t(\cE;h)=-c_t(H,h_H)\log\left(1+\frac{\sqrt{-1}}{2\pi}  \left\{\Tr \left(I+\frac{\sqrt{-1}}{2\pi} t\Theta_{H}\right)^{-1}\!\!t \,A\wedge A^{\ast}\right\}\right).$$

\end{proposition}
\begin{proof}
We have
$$\det(I +\tilde{t}(\Theta_{H}-u A \wedge A^{\ast}))=\det(I+\tilde{t}\Theta_{H})\det(I-u\tilde{t}(I+\tilde{t}\Theta_{H})^{-1}A\wedge A^{\ast}).$$
To apply Lemma \ref{Algebra}, we use the local unitary frame $e_{1},\dots,e_{r}$  described above.  In this frame $\mathrm{End}(F,H)$-valued $(1,0)$-form $A$ is given by $A(e_{1})=a_{2}e_{2}+\cdots +a_{r}e_{r}$, and $\mathrm{End}(H,F)$ valued $(0,1)$-form $A^{\ast}$ --- by $A^{\ast}(e_{i})=\bar{a}_{i}e_{1}$, where $a_{i}\in\mathcal{A}^{1,0}(U)$ and $\bar{a}_{i}\in\mathcal{A}^{0,1}(U)$, $i=2,\dots,r$. Thus $A\wedge A^{\ast}$ is represented by the  $(r-1)\times(r-1)$ matrix of $(1,1)$-forms on $U$ with matrix elements $a_{i}\wedge\bar{a}_{j}$, $i,j=2,\dots,r$.
Denoting corresponding matrix elements of $(I+\tilde{t}\Theta_{H})^{-1}$ by $\theta_{ij}$, $i,j=2,\dots,r$, we get
$$\det(I-u\tilde{t}(I+\tilde{t}\Theta_{H})^{-1}A\wedge A^{\ast})=\det(I- u\tilde{t} B),\quad B_{ij}=\sum_{k=2}^{r}\theta_{ik}a_{k}\wedge \bar{a}_{j}.$$
Since $B^2 = b B$, where $b=-\Tr B=-\Tr (I+\tilde{t}\Theta_{H})^{-1}A\wedge A^{\ast}$, using Lemma \ref{Algebra} we get
\begin{equation} \label{bB}
\det(I-u\tilde{t}(I+\tilde{t}\Theta_{H})^{-1}A\wedge A^{\ast}) =\frac{1}{1+ u\tilde{t} \Tr \{(I+\tilde{t}\Theta_{H})^{-1}A\wedge A^{\ast}\}}.
\end{equation}
Substituting \eqref{bB} into \eqref{B-C-2} and integrating, we get the result.
\end{proof}

As an application of Proposition \ref{B-C-formula-1}, one can easily compute the Bott-Chern forms of the metrized relative Euler sequence, originally derived via a rather intricate combinatorial analysis by C. Mourougane (Theorem $1$ in \cite{Mou}). We leave the details to the interested reader. A link between this formula and the theory of Gillet-Soul\'e comes in the form of Arakelov geometry and the computation of Bott-Chern forms on flag manifolds \cite{Tam}. This shall be explored in another paper.

\subsection*{Acknowledgements} The work of L.T. was partially supported by the NSF grants DMS-0705263 and DMS-1005769. He thanks James Simons and  Dennis Sullivan for carefully explaining their work, and expresses his gratitude to Michael Movshev for writing the Mathematica packages used at the early stage of this work, and for sharing his insight in formulating Lemma \ref{Main}. L.T. is also grateful to Nikita Nekrasov for the suggestion to consider a $(1,1)$ component of the Wess-Zumino-Novikov-Witten $2$-form. We also thank the anonymous referee for constructive remarks and suggestions.
\bibliographystyle{amsalpha}
\bibliography{Bott-Chern-final}
\end{document}